\documentclass[a4paper, 10pt, parskip=half]{scrartcl}

\usepackage[utf8]{inputenc}
\usepackage[T1]{fontenc}
\usepackage{lmodern}
\usepackage{csquotes}
\usepackage{amsmath}
\usepackage{amssymb}
\usepackage{amsthm}
\usepackage{amsfonts}
\usepackage{dsfont}
\usepackage{mathtools}
\usepackage{marginnote}
\usepackage[dvipsnames]{xcolor}
\usepackage{hyperref}
\hypersetup{%
  colorlinks=true,
  linkcolor=black,
  citecolor=black,
  urlcolor=blue,
  pdftitle={Upper estimates for the Hausdorff dimension of the temporal singular set in chemotaxis-fluid systems},
  pdfauthor={Mario Fuest},
  pdfkeywords={},
  bookmarksopen=true,
}

\usepackage{xcolor}

\usepackage[numbers, sort&compress]{natbib}

\newcommand{\R}{\mathbb{R}}

\newcommand{\N}{\mathbb{N}}

\newcommand{\mc}[1]{\mathcal{#1}}

\ifdefined\labelenumi%
  \renewcommand{\labelenumi}{(\roman{enumi})}
  
\fi

\newcommand{\eps}{\varepsilon}
\newcommand{\vt}{\vartheta}

\newcommand{\defs}{\coloneqq}

\newcommand{\sea}{\searrow}

\newcommand{\ol}{\overline}
\newcommand{\wt}{\widetilde}


\newcommand{\ddt}{\frac{\mathrm{d}}{\mathrm{d}t}}

\newcommand{\embed}{\hookrightarrow}

\newcommand{\hp}{\hphantom}
\newcommand{\pe}{\mathrel{\hp{=}}}

\newcommand{\intom}{\int_\Omega}

\newcommand{\intntom}{\int_0^T \int_\Omega}

\newcommand{\intninfom}{\int_0^\infty \int_\Omega}

\newcommand{\Ombar}{\ol \Omega}

\newcommand{\Ombarinf}{\Ombar \times [0, \infty)}
\newcommand{\Omegainf}{\Omega \times [0, \infty)}

\newcommand{\loc}{\mathrm{loc}}

\newcommand{\leb}[2][\Omega]{\ensuremath{L^{#2}(#1)}}
\newcommand{\lebl}[1][\Omega]{\ensuremath{L\log L(#1)}}
\newcommand{\lebs}[2][\Omega]{\ensuremath{L_\sigma^{#2}(#1)}}
\newcommand{\sob}[3][\Omega]{\ensuremath{W^{#2, #3}(#1)}}

\newcommand{\con}[2][\Ombar]{\ensuremath{C^{#2}(#1)}}

\newcommand{\neps}{n_\eps}
\newcommand{\net}{n_{\eps t}}
\newcommand{\nne}{n_{0 \eps}}
\newcommand{\ce}{c_\eps}
\newcommand{\cet}{c_{\eps t}}
\newcommand{\cne}{c_{0 \eps}}
\newcommand{\ue}{u_\eps}
\newcommand{\uet}{u_{\eps t}}
\newcommand{\une}{u_{0 \eps}}
\newcommand{\Pe}{P_\eps}
\newcommand{\Ye}{Y_\eps}
\newcommand{\Fe}{F_\eps}
\newcommand{\ye}{y_\eps}
\newcommand{\yej}{y_{\eps_j}}
\newcommand{\yoe}{y_{1 \eps}}
\newcommand{\yte}{y_{2 \eps}}

\newcommand{\tops}{\texorpdfstring}

\makeatletter
\renewenvironment{proof}[1][\proofname]{\par
  \pushQED{\qed}%
  \normalfont \topsep0\p@\relax
  \trivlist
  \item[\hskip\labelsep\scshape
  #1\@addpunct{.}]\ignorespaces
}{%
  \popQED\endtrivlist\@endpefalse
}
\makeatother

\newtheorem{base}{Base}[section]
\numberwithin{equation}{section}

\newtheorem{theorem}[base]{Theorem} \newtheorem*{theorem*}{Theorem}
\newtheorem{lemma}[base]{Lemma} \newtheorem*{lemma*}{Lemma}
 \newtheorem*{prop*}{Proposition}
 \newtheorem*{cor*}{Corollary}

\theoremstyle{definition}
\newtheorem{remark}[base]{Remark} \newtheorem*{remark*}{Remark}
\newtheorem{definition}[base]{Definition} \newtheorem*{definition*}{Definition}
 \newtheorem*{example*}{Example}
 \newtheorem*{cond*}{Condition}

\textwidth170mm
\textheight210mm
\oddsidemargin-5mm
\evensidemargin-5mm

\setkomafont{title}{\normalfont\Large}
\title{Upper estimates for the Hausdorff dimension of the temporal singular set in chemotaxis-fluid systems}
\usepackage{authblk}

\author{Mario Fuest\footnote{e-mail: fuest@ifam.uni-hannover.de}}
\affil{Leibniz Universität Hannover,\\ Institut für Angewandte Mathematik,\\ Welfengarten 1, 30167 Hannover, Germany}
\date{}

\begin{document}
\maketitle

\KOMAoptions{abstract=true}
\begin{abstract}
  \noindent
  The chemotaxis-fluid system
  \begin{align}\tag{$\star$}\label{prob:star}
    \begin{cases}
      n_t + u \cdot \nabla n = \Delta n - \nabla \cdot (n \nabla c), \\
      c_t + u \cdot \nabla c = \Delta c - nc, \\
      u_t + (u \cdot \nabla) u = \Delta u + \nabla P + n \nabla \Phi, \quad \nabla \cdot u = 0,
    \end{cases}
  \end{align}
  models aerobic bacteria interacting with a fluid via transportation and buoyancy.
  When posed on a three-dimensional, smoothly bounded, convex domain $\Omega$,
  \eqref{prob:star} complemented with suitable initial and boundary conditions is known to admit a global `weak energy solution',
  which recently has been shown to be smooth (after a redefinition on a set of measure $0$) in $\overline \Omega \times E$ for some countable union of open intervals $E$ with $|(0, \infty) \setminus E| = 0$.\\[2pt]
  The present paper investigates further regularity properties of this solution
  and proves that ($E$ can be chosen such that) the $\frac12$-dimensional Hausdorff measure of $(0, \infty) \setminus E$ vanishes and thus that in particular its Hausdorff dimension is at most $\frac12$.
  As $\frac12$ has been the best known upper estimate for the Hausdorff dimension of the temporal singular set for the unperturbed Navier--Stokes equations for quite some time,
  this result is the best one can hope for \eqref{prob:star} without significant progress in the regularity theory of (homogeneous) Navier--Stokes equations.
  \\[5pt]
 \textbf{Key words:} {chemotaxis, Navier--Stokes, temporal singular set, partial regularity, Hausdorff dimension} \\
 \textbf{AMS Classification (2020):} {35B65 (primary); 35K55, 35Q35, 35Q92, 92C17 (secondary)}
\end{abstract}

\section{Introduction}
While aerobic bacteria may strive to orient their movement towards oxygen-rich regions, in liquid environments their motion is also influenced by the velocity of a surrounding fluid, which in turn is affected by buoyancy forces caused by the bacteria's mass.
In order to capture these effects and in particular aiming to understand the spontaneous formation of the spatial patterns having been observed experimentally (\cite{DombrowskiEtAlSelfConcentrationLargeScaleCoherence2004}),
\cite{TuvalEtAlBacterialSwimmingOxygen2005} proposes the model
\begin{align}\label{prob:main}
  \begin{cases}
    n_t + u \cdot \nabla n = \Delta n - \nabla \cdot (n \nabla c)                            & \text{in $\Omega \times (0, \infty)$}, \\
    c_t + u \cdot \nabla c = \Delta c - nc                                                   & \text{in $\Omega \times (0, \infty)$}, \\
    u_t + (u \cdot \nabla) u = \Delta u + \nabla P + n \nabla \Phi, \quad \nabla \cdot u = 0 & \text{in $\Omega \times (0, \infty)$}, \\
    \partial_\nu n = \partial_\nu c = 0, \quad u = 0                                         & \text{on $\partial \Omega \times (0, \infty)$}, \\
    (n, c, u)(\cdot, 0) = (n_0, c_0, u_0)                                                    & \text{in $\Omega$},
  \end{cases}
\end{align}
where $\Phi$ is a given (gravitational) potential and $n_0, c_0, u_0$ are given suitably smooth initial data.
We consider \eqref{prob:main} in smooth, bounded, convex domains $\Omega \subset \R^3$.

The system consists of an intricate coupling
between a chemotaxis-consumption model as introduced by Keller and Segel (\cite{KellerSegelTravelingBandsChemotactic1971}) and the Navier--Stokes equations.
The regularity theory for the latter is infamously difficult in three-dimensional settings (\cite{LerayMouvementLiquideVisqueux1934}, \cite{WiegnerNavierStokesEquationsNeverending1999}, \cite{SohrNavierStokesEquations2001}),
while close relatives of the former are even known to exhibit finite-time blow-up phenomena
(see \cite{JagerLuckhausExplosionsSolutionsSystem1992}, \cite{HerreroVelazquezBlowupMechanismChemotaxis1997}, \cite{NagaiBlowupNonradialSolutions2001}, \cite{WinklerFinitetimeBlowupHigherdimensional2013} for a small selection of such results
and \cite{LankeitWinklerFacingLowRegularity2019} for a recent survey, for instance).

Accordingly, global classical existence results for \eqref{prob:main} so far require smallness of the data (\cite{DuanEtAlGlobalSolutionsCoupled2010}, \cite{CaoLankeitGlobalClassicalSmalldata2016})
or are limited to the two-dimensional setting (\cite{WinklerGlobalLargedataSolutions2012}).
(For the analysis of the large-time behavior in the latter case, we refer to \cite{WinklerStabilizationTwodimensionalChemotaxisNavier2014}, \cite{ZhangLiConvergenceRatesSolutions2015}.)
For three-dimensional domains $\Omega$,
global weak solutions have first been constructed for versions of \eqref{prob:main} without the nonlinear convection term $(u \cdot \nabla) u$ in the fluid equation (\cite{WinklerGlobalLargedataSolutions2012})
but later also for \eqref{prob:main} without any simplification (\cite{WinklerGlobalWeakSolutions2016}).
Moreover, not only these but all `eventual energy solutions' eventually become smooth and converge towards homogeneous equilibria (\cite{WinklerHowFarChemotaxisdriven2017}).

While the question whether these solutions are in fact classical remains elusive, one may still wonder which further partial regularity properties they enjoy.
Indeed, weak solutions to the unperturbed Navier--Stokes equations can only fail to be locally $L^\infty$ in space-time on a set with Hausdorff dimension at most $\frac53$ (\cite{SchefferNavierStokesEquationsBounded1980})
and for suitable solutions, i.e., weak solutions inter alia fulfilling a generalized energy inequality,
the upper estimate for the Hausdorff dimension of the spatio-temporal singular set can even be improved to $1$ (\cite{CaffarelliEtAlPartialRegularitySuitable1982}).
Very recently, a corresponding result for a close relative of \eqref{prob:main} posed on $\R^3$ has been proven (\cite{ChenEtAlHausdorffMeasureSingularity2023}).

In the present paper, we consider a different type of regularity,
namely the largeness of the set of times $t$ where a weak solution of \eqref{prob:main} is classical in $\Ombar \times U_t$ for some neighbourhood $U_t$ of $t$.
After all, the partial regularity results above do not exclude the possibility that there is an open interval $I$
such that  $u(t)$ does not belong to, say, $C^2(\Ombar; \R^3)$ for all $t \in I$ (even after a redefinition on a null set).
That this is not possible for weak solutions of the unperturbed Navier--Stokes equations satisfying the standard energy inequality
has already be shown by Leray in his celebrated structure theorem (\cite[Section~33]{LerayMouvementLiquideVisqueux1934})
which entails the stronger statement that the set of times $t$ for which such a neighbourhood $U_t$ does not exist, i.e., the \emph{temporal singular set} (TSS),
has Lebesgue measure $0$.
Going beyond that, it is moreover known that the $\frac12$-dimensional Hausdorff measure of the TSS is $0$ and that hence its Hausdorff dimension is at most $\frac12$,
see \cite[Proposition~4.1]{FoiasTemamAnalyticGeometricProperties1979} and \cite[Theorem~5]{GigaSolutionsSemilinearParabolic1986}.
(In fact, the crucial computations had already been performed by Leray, see \cite[Section~34]{LerayMouvementLiquideVisqueux1934}.)

In \cite{WinklerDoesLerayStructure2023}, Winkler has recently shown
that the TSSs of the weak solutions of \eqref{prob:main} constructed in \cite{WinklerGlobalWeakSolutions2016} have Lebesgue measure $0$.
Our main result expands on this and shows that also the more nuanced statement regarding the Hausdorff dimension of the TSS
carries over to the Navier--Stokes equations coupled via buoyancy and transportation to a chemotaxis-consumption system.
\begin{theorem}\label{th:main}
  Let $\Omega \subset \R^3$ be a smooth, bounded, convex domain and let
  \begin{allowdisplaybreaks}
  \begin{align}
    n_0 &\in \lebl \text{ be nonnegative with $n_0 \not\equiv 0$}, \label{eq:main:n_0}\\
    c_0 &\in \leb\infty \text{ be nonnegative with $\sqrt{c_0} \in \sob12$}, \label{eq:main:c_0} \\
    u_0 &\in \lebs2 \defs \{\,\varphi \in L^2(\Omega; \R^3) \mid \nabla \cdot \varphi = 0 \text{ in $\mc D'(\Omega)$}\,\}, \label{eq:main:u_0}\\
    \Phi &\in \con\infty \cap \sob2\infty. \notag
  \end{align}
  \end{allowdisplaybreaks}%
  Then there exist a global weak energy solution $(u, v, w)$ of \eqref{prob:main} in the sense of Definition~\ref{def:sol} below
  as well as $T_\star \in [0, \infty)$, a set $\mc I \subseteq \N$ and pairwise disjoint open intervals $I_\iota \subseteq (0, T_\star)$, $\iota \in \mc I$,
  such that the set
  \begin{align*}
    E \defs \bigcup_{\iota \in \mc I} I_\iota \cup (T_\star, \infty) 
  \end{align*}
  has the following properties:
  After a redefinition on a set of Lebesgue measure $0$,
  the triple $(n, c, u)$ belongs to $\big(C^{2, 1}(\Ombar \times E)\big)^5$,
  there is a function $P \in C^{1, 0}(\Ombar \times E)$ such that $(n, c, u, P)$ forms a classical solution of \eqref{prob:main} in $\Ombar \times E$
  and the $\frac12$-dimensional Hausdorff measure of $(0, \infty) \setminus E$ is $0$, implying that its Hausdorff dimension is at most $\frac12$.
\end{theorem}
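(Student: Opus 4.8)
The plan is to separate the two essentially independent ingredients of the statement: the \emph{qualitative} part (existence of a weak energy solution, its eventual smoothness, the structure of $E$ as a countable union of intervals, the recovery of the pressure, and the vanishing of the one-dimensional Lebesgue measure of $(0,\infty)\setminus E$) and the genuinely new \emph{quantitative} part (the vanishing of the $\tfrac12$-dimensional Hausdorff measure). For the former I would rely on the by-now standard construction of \cite{WinklerGlobalWeakSolutions2016} together with the eventual-regularity and Leray-type structure results of \cite{WinklerHowFarChemotaxisdriven2017} and \cite{WinklerDoesLerayStructure2023}: these provide the solution $(n,c,u)$, an exceptional time $T_\star$ beyond which the solution is smooth, and an open, full-measure \emph{regular set} $E=\bigcup_{\iota\in\mc I}I_\iota\cup(T_\star,\infty)$ on which $(n,c,u)\in\big(C^{2,1}(\Ombar\times E)\big)^5$ after modification on a null set, with the pressure produced from the (now smooth) velocity equation by the usual De~Rham/elliptic argument. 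Writing $S\defs(0,\infty)\setminus E\subseteq[0,T_\star]$ for the temporal singular set, this leaves the assertion $\mc H^{1/2}(S)=0$.

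The engine for the Hausdorff estimate is a \emph{quantified local regularity theorem} for \eqref{prob:main}. Concretely, I would isolate a functional $y(t)$ --- built from $\int_\Omega|\nabla u|^2$ together with the critical-level norms of $n$ and $c$ dictated by the a priori energy/entropy class of the solution --- with three properties: (a) local boundedness of $y$ on an interval forces, by a parabolic bootstrap, the $C^{2,1}$-regularity of $(n,c,u)$ there, so that $S$ is exactly the set where $y$ fails to be locally bounded; (b) $y$ obeys a superlinear differential inequality whose leading term is the Navier--Stokes one, $y'\le C(1+y)^3$, so that the associated persistence-of-regularity theory yields a regular existence time bounded below by $c\,y(t)^{-2}$; and (c) $y\in L^1((0,\infty))$, as a consequence of the global energy and dissipation inequalities. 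Property (b) is decisive, because on a maximal component $(a_i,b_i)\subseteq E$ whose right endpoint $b_i<\infty$ is singular, the time bound forces $b_i-t\ge c\,y(t)^{-2}$ for \emph{every} $t\in(a_i,b_i)$, i.e.
\begin{align*}
  y(t)\ge \sqrt{c}\,(b_i-t)^{-1/2}\qquad\text{for all }t\in(a_i,b_i),
\end{align*}
and hence, using that $S$ is Lebesgue-null so that the intervals $I_\iota$ exhaust $(t_0-\rho,t_0)$ up to a null set,
\begin{align*}
  \int_{t_0-\rho}^{t_0} y(t)\diff t\ge 2\sqrt{c}\,\Big(\sum\nolimits_{(a_i,b_i)\subseteq(t_0-\rho,t_0)}(b_i-a_i)\Big)^{1/2}\ge 2\sqrt{c}\,\rho^{1/2}
\end{align*}
for every $t_0\in S$ along a suitable sequence $\rho=\rho_j\downarrow0$ (at right endpoints one may take any small $\rho$; at left-accumulation points one aligns $t_0-\rho_j$ with the left endpoints $a_i$ that accumulate at $t_0$).

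With this lower bound in hand the Hausdorff estimate is a soft covering argument. Fix $\eps,\delta\gt0$; since $y\in L^1$ and $|S|=0$, absolute continuity of the integral yields an open set $U\supseteq S$ with $\int_U y\lt\eps$. The closed intervals $[t_0-\rho_j,t_0]\subseteq U$ with $\rho_j\lt\delta$ produced above form a Vitali cover of $S$, so the Vitali covering lemma extracts a countable disjoint subfamily $\{[t_k-\rho_k,t_k]\}$ whose fivefold dilations still cover $S$; disjointness and the lower bound then give
\begin{align*}
  \sum_k\rho_k^{1/2}\le \frac{1}{2\sqrt c}\sum_k\int_{t_k-\rho_k}^{t_k}y\diff t\le\frac{1}{2\sqrt c}\int_U y<\frac{\eps}{2\sqrt c},
\end{align*}
whence $\mc H^{1/2}_{5\delta}(S)\le\sqrt5\sum_k\rho_k^{1/2}\to0$ as $\eps\to0$, so $\mc H^{1/2}(S)=0$. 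I expect the main obstacle to be step (b): deriving for the \emph{coupled} system a differential inequality whose critical exponent coincides with that of the homogeneous Navier--Stokes equations, i.e.\ showing that the chemotactic coupling $\nabla\cdot(n\nabla c)$, the consumption $-nc$ and the transport terms $u\cdot\nabla n$, $u\cdot\nabla c$ can all be absorbed --- using the boundedness and parabolic smoothing of $c$, the energy/entropy control of $n$, and the dissipation in $u$ --- without degrading the scaling or the integrability in (c). It is precisely this matching of exponents that pins the dimension bound at $\tfrac12$ rather than something larger, in line with the heuristic that the fluid component is the sole genuine obstruction to regularity.
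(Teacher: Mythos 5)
Your overall architecture matches the paper's: both rest on (a) ``boundedness of a functional $y$ on an interval implies $C^{2,1}$-regularity there'' (the paper's Lemma~\ref{lm:y_bdd_conv_c2}), (b) a superlinear ODI $y' \le Ky^3$ yielding forward persistence of regularity for a time $\gtrsim y^{-2}(t_0)$ (Lemmata~\ref{lm:y_odi} and~\ref{lm:ode_est}), and (c) $y \in L^1$ (Lemma~\ref{lm:yl1}), with the exponents $3$ and $1$ combining to give $1-\frac{1}{3-1}=\frac12$. Your one genuine deviation is harmless and arguably instructive: where the paper invokes the criterion of Khai--Tri (Lemma~\ref{lm:crit_hausdim}) as a black box, you re-derive it via the classical Leray/Scheffer route --- the lower bound $y(t)\ge\sqrt{c}\,(b_i-t)^{-1/2}$ on each regular component with singular right endpoint, $\sum x_i^{1/2}\ge(\sum x_i)^{1/2}$, absolute continuity of the integral, and a Vitali covering. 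That computation is sound (including the alignment of $t_0-\rho$ with left endpoints, which is needed to ensure the components intersecting $(t_0-\rho,t_0)$ are actually contained in it), and it makes the mechanism more transparent than the citation does.

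The genuine gap is step (b), which you correctly flag as ``the main obstacle'' but do not resolve --- and resolving it is the actual content of the paper. The difficulty is not merely technical absorption of the coupling terms: the obvious candidate $y_1+y_2$ with $y_1=\intom n^p+\intom|\nabla c|^{2p}$ and $y_2=\intom|A^{\alpha/2}u|^2$ (the functional of \cite{WinklerDoesLerayStructure2023}) provably cannot work, because its two summands have \emph{mismatched} temporal integrability ($L^{s_1}$ vs.\ $L^{s_2}$) and mismatched ODI exponents ($\vt_1$ vs.\ $\vt_2$), and the resulting bound \eqref{eq:intro:hausdorff_first} is strictly larger than $\frac12$ for all admissible $p,\alpha$. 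The paper's key move is to take $y=y_1^{1/3}+y_2+1$ with the endpoint choices $p=3$, $\alpha=1$: the power $\mu=\frac13=s_1(3)/s_2(1)$ is calibrated precisely so that $y_1^{1/3}$ and $y_2$ both lie in $L^1$ in time \emph{and} both satisfy an ODI with exponent $3$. This in turn forces a re-derivation of all the cross-term estimates, since the dissipative terms now carry the weight $y_{1\eps}^{\mu-1}$ (Lemmata~\ref{lm:rhs_est_ct}--\ref{lm:buoyancy_est}); in particular the buoyancy term $\intom\Delta u_\eps\cdot\mc P(n_\eps\nabla\Phi)$ must be handled by interpolating $\|n_\eps\|_{L^2}$ between $L^3$ and the conserved $L^1$ norm rather than via the entropy dissipation. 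Without identifying this specific functional and verifying these estimates, your proposal establishes the reduction of Theorem~\ref{th:main} to properties (a)--(c) but not the theorem itself.
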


\begin{remark}
  We note that Theorem~\ref{th:main} is only concerned with a specific global weak energy solution of \eqref{prob:main},
  namely the one constructed in \cite{WinklerGlobalWeakSolutions2016} as a limit of solutions to regularized systems.
  An interesting question, which we have to leave open here, is whether an analogous result holds for potential other solutions as well.
  One possible (but perhaps quite ambitious) approach consists of first extending the weak-strong uniqueness results available for the unperturbed Navier--Stokes equations
  (see \cite[Section~32]{LerayMouvementLiquideVisqueux1934}, \cite{SerrinInitialValueProblem1963} and \cite{SohrvonWahlSingularSetUniqueness1984}, for instance) to \eqref{prob:main}
  and then to directly work on the level of weak solutions.
\end{remark}

\paragraph{Main ideas.}
The proof in \cite{WinklerDoesLerayStructure2023}, which shows that the TSSs of certain weak solutions to \eqref{prob:main} have Lebesgue measure $0$,
is based on the functionals
\begin{align}\label{eq:intro:y_i}
  y_1(t) \defs \intom n^p(\cdot, t) + \intom |\nabla c(\cdot, t)|^{2p}, \quad
  y_2(t) \defs \intom |A^\frac{\alpha}{2} u(\cdot, t)|^2,
\end{align}
where $p \in (\frac32, 3)$ and $a \in (\frac12, 1)$ and where $A$ denotes the Stokes operator.
Formally, they fulfill $y_i \in L^{s_i}((0, T))$ for
\begin{align*}
  s_1 \defs s_1(p) \defs \frac{2}{3(p-1)}
  \quad \text{and} \quad
  s_2 \defs s_2(\alpha) \defs \frac1\alpha
\end{align*}
as well as $(y_1+y_2)' \le C (y_1+y_2+1)^{\vt}$ in $(0, T)$ for some $C > 0$ and $\vt > 1$.
The most crucial condition for the latter is $\vt \ge \max\{\vt_1, \vt_2\}$, where
\begin{align*}
  \vt_1 \defs \vt_1(p) \defs \frac{2p-1}{2p-3}
  \quad \text{and} \quad
  \vt_2 \defs \vt_2(\alpha) > \underline{\vt_2}(\alpha) \defs \frac{\alpha+\frac12}{\alpha-\frac12}.
\end{align*}
Here, $\vt_i$ corresponds to $y_i$:
If no transport and buoyancy terms were present, then one would obtain $y_i' \le C (y_i+1)^{\vt_i}$ in $(0, T)$.

Since boundedness of $y_1 + y_2$ can be used to bootstrap the boundedness of $(n, c, u)$ in stronger topologies (see the proof of Lemma~\ref{lm:y_bdd_conv_c2})
and due to a result linking the Hausdorff dimension of TSSs to the integrability of functionals solving superlinear ODIs
(see Lemma~\ref{lm:crit_hausdim} and Lemma~\ref{lm:ode_est}),
the estimates obtained in \cite{WinklerDoesLerayStructure2023} already indicate the following:
By choosing $p$ and $\alpha$ close to $3$ and $1$, respectively, noting that
\begin{align*}
  1 - \frac{s_1(3)}{\vt_1(3)-1} = \frac12
  \quad \text{and} \quad
  1 - \frac{s_2(1)}{\underline{\vt_2}(1)-1} = \frac12,
\end{align*}
and making the formal arguments above rigorous,
one can conclude that the Hausdorff dimensions of the TSSs of solutions
to both the fluid-free chemotaxis-consumption system and the unperturbed Navier--Stokes equations are at most $\frac12$.
Likewise, with this approach alone one can bound the Hausdorff dimension of the TSSs of solutions to the fully coupled system \eqref{prob:main} by
\begin{align}\label{eq:intro:hausdorff_first}
  1 - \frac{\min\{s_1(p), s_2(\alpha)\}}{\max\{\vt_1(p), \underline{\vt_2}(\alpha)\}-1}
\end{align}
which, however, is strictly larger than $\frac12$ for all $p \in (\frac32, 3]$ and all $\alpha \in (\frac12, 1]$.

The crucial new idea of the present paper, which will eventually allow us to improve on \eqref{eq:intro:hausdorff_first},
is to consider the functional
\begin{align}\label{eq:intro:y}
  y \defs y_1^\mu + y_2 + 1
\end{align}
instead of $y_1 + y_2$, where $\mu \defs \mu(p, \alpha) \defs \frac{s_1(p)}{s_2(\alpha)}$.
This functional, in particular the choice of $\mu$, has the benefit that the best known integrability condition for the summands $y_1^\mu$ and $y_2$ is the same;
they both belong to $L^{s_2}((0, T))$.
Moreover, neglecting the transport terms for a moment, the estimate $y_1' \le C (y_1+1)^{\vt_1}$ implies
\begin{align*}
  (y_1^\mu)' \le \tilde C (y_1^\mu+1)^{\vt_{1,\mu}} \quad \text{in $(0, T)$},
  \qquad \text{where} \qquad \vt_{1, \mu} \defs \vt_{1, \mu}(p, \alpha) \defs \frac{\vt_1(p)-1}{\mu(p, \alpha)}+1
\end{align*}
for some $\tilde C > 0$.
Since $\vt_{1, \mu}(3, 1) = 2 = \underline{\vt_2}(1)$ (with $\mu(3, 1) = \frac13$), $s_2(1) = 1$ and $1 - \frac{1}{2} = \frac12$,
there is indeed hope that this idea leads to the upper estimate $\frac12$ for the Hausdorff dimension of the TSS.

The cost of this approach, however, is of course that the terms stemming from the transport and buoyancy interactions
can no longer be dealt with as in \cite{WinklerDoesLerayStructure2023}.
Accordingly, the main part of our analysis focuses on suitably estimating the worrisome terms appearing when differentiating $y$ (Lemmata~\ref{lm:rhs_est_ct}--\ref{lm:buoyancy_est})
and thereby showing that $y$ solves a certain superlinear ODI (Lemma~\ref{lm:y_odi}).

Finally, let us note that in contrast to \cite{WinklerDoesLerayStructure2023} we can take the endpoints $p=3$ and $\alpha=1$ in \eqref{eq:intro:y_i},
the main reason being that \cite{WinklerDoesLerayStructure2023} makes it possible to work on time intervals where the approximate solutions converge in $C^{2, 1}$.
These choices not only simplify the computations below
but also ensure that the $\frac12$-dimensional Hausdorff measure (and not only the $d$-dimensional Hausdorff measure for $d > \frac12$) of the TSS is $0$.

\paragraph{Plan of the paper.}
The rest of the paper is organized as follows:
We recall global existence properties of approximate problems in Section~\ref{sec:approx},
we derive a superlinear ODI for the functional $\ye$ defined in \eqref{eq:intro:y} in Section~\ref{sec:func_ineq},
we take the limit of both the solutions and $\ye$ in Section~\ref{sec:eps_sea_0}
and make use of these preparations to show that the $\frac12$-dimensional Hausdorff measure of the TSS vanishes in Section~\ref{sec:hausdorff}.

\paragraph{Notation.}
Throughout the article, we fix a smooth, bounded, convex domain $\Omega \subset \R^3$, $n_0, c_0, u_0$ as in \eqref{eq:main:n_0}--\eqref{eq:main:u_0} and $\Phi \in \con\infty \cap \sob2\infty$.
Moreover, for a Lebesgue measurable set $E$ and $s \in (0, 1)$,
we denote by $\leb[E]{s}$ the set of (equivalence classes of) all real-valued measurable functions $\varphi$ on $E$
for which $\|\varphi\|_{\leb[E]s} \defs (\int_E |\varphi|^s)^\frac1s$ is finite. 

\section{Global solutions to approximate problems}\label{sec:approx}
We start by recalling a global existence result for solutions to certain approximate problems.
\begin{lemma}\label{lm:approx}
  Let $(\nne, \cne, \une)_{\eps \in (0, 1)} \subset C^\infty(\Ombar) \times C^\infty(\Ombar) \times \{\,\varphi \in C_c^\infty(\Omega) \mid \nabla \cdot \varphi = 0\,\}$ with
  \begin{align*}
    &\nne \ge 0 \text{ in $\Omega$}, \quad \|\nne\|_{\leb1} = \|n_0\|_{\leb1}, \\
    &\cne \ge 0 \text{ in $\Omega$}, \quad \sqrt{\cne} \in C^\infty(\Ombar), \quad \|\cne\|_{\leb\infty} \le \|c_0\|_{\leb\infty}, \\
    &\|\une\|_{\leb2} = \|u_0\|_{\leb2}
  \end{align*}
  for all $\eps \in (0, 1)$, and
  \begin{alignat*}{2}
    \nne &\to n_0 &&\qquad \text{in $\lebl$}, \\
    \sqrt{\cne} &\to \sqrt{c_0} &&\qquad \text{a.e.\ in $\Omega$ and in $\sob12$}, \\
    \une &\to u_0 &&\qquad \text{in $\leb2$}
  \end{alignat*}
  as $\eps \sea 0$.
  Furthermore, we denote by $A$ the Stokes operator on $\lebs2$ and by
  \begin{align*}
    \Ye v \defs (1 + \eps A)^{-1} v
    \qquad \text{for $v \in \lebs2$ and $\eps \in (0, 1)$},
  \end{align*}
  the Yosida approximation,
  and set
  \begin{align*}
    \Fe(s) \defs \frac1\eps \ln(1 + \eps s)
    \qquad \text{for $s \ge 0$ and $\eps \in (0, 1)$}.
  \end{align*}
  For each $\eps \in (0, 1)$, there exists a global classical solution $(\neps, \ce, \ue, \Pe) \in (C^{\infty}(\Ombarinf))^5 \times C^{1, 0}(\Omega \times (0, \infty))$ of
  \begin{align}\label{prob:approx}
    \begin{cases}
      \net + \ue \cdot \nabla \neps = \Delta \neps - \nabla \cdot (\neps \Fe'(\neps) \nabla \ce)                  & \text{in $\Omega \times (0, \infty)$}, \\
      \cet + \ue \cdot \nabla \ce = \Delta \ce - F(\neps) \ce                                                     & \text{in $\Omega \times (0, \infty)$}, \\
      \uet + (\Ye \ue \cdot \nabla) \ue = \Delta \ue + \nabla \Pe + \neps \nabla \Phi, \quad \nabla \cdot \ue = 0 & \text{in $\Omega \times (0, \infty)$}, \\
      \partial_\nu \neps = \partial_\nu \ce = 0, \quad \ue = 0                                                    & \text{on $\partial \Omega \times (0, \infty)$}, \\
      (\neps, \ce, \ue)(\cdot, 0) = (\nne, \cne, \une)                                                            & \text{in $\Omega$}
    \end{cases}
  \end{align}
  such that $\neps, \ce \ge 0$ in $\Ombarinf$.
\end{lemma}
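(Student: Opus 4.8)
The plan is to first establish local-in-time existence of a classical solution by a contraction mapping argument and then to extend it to all of $(0, \infty)$ by deriving ($\eps$-dependent) a priori bounds that rule out finite-time blow-up. The feature making this tractable is that every nonlinearity in \eqref{prob:approx} has been regularized: since $s\Fe'(s) = \frac{s}{1+\eps s} \le \frac1\eps$ for $s \ge 0$, the chemotactic flux coefficient $\neps \Fe'(\neps)$ is uniformly bounded (and globally Lipschitz); since $\Fe(s) = \frac1\eps \ln(1+\eps s)$ grows only logarithmically with $0 \le \Fe'(s) \le 1$, the consumption term $\Fe(\neps)\ce$ causes no difficulty; and since $\Ye = (1+\eps A)^{-1}$ maps $\lebs2$ boundedly into $D(A) \hookrightarrow \sob22$, the regularized convection velocity $\Ye \ue$ is far more regular than $\ue$ itself.

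For local existence I would work in the semigroup framework, writing the $\neps$- and $\ce$-equations via the Neumann heat semigroup $(e^{t\Delta})_{t \ge 0}$ and the $\ue$-equation via the Stokes semigroup $(e^{-tA})_{t \ge 0}$, and then recast \eqref{prob:approx} as a fixed-point problem for the map sending $(\neps, \ce, \ue)$ to the solution of the associated linearized system. On a space such as $C([0, T]; \con0 \times \sob1q \times D(A^\beta))$ for suitable $q > 3$ and $\beta \in (\frac34, 1)$, the smoothing estimates for the two semigroups together with the boundedness and local Lipschitz continuity of the regularized nonlinearities yield a contraction for $T$ small, giving a unique mild solution; parabolic Schauder theory then bootstraps this to a classical, in fact $C^\infty$, solution on $\Ombar \times [0, T]$. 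This construction also furnishes a blow-up criterion: the solution extends to a maximal interval $[0, \tmax)$, and if $\tmax < \infty$ then $\|\neps(\cdot, t)\|_{\con0} + \|\ce(\cdot, t)\|_{\sob1q} + \|A^\beta \ue(\cdot, t)\|_{\lebs2} \to \infty$ as $t \nea \tmax$.

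To exclude finite-time blow-up I would establish the a priori estimates in a bootstrapping order that exploits the regularizations. First, nonnegativity of $\neps$ and $\ce$ follows from the maximum principle, since $0$ is a subsolution of both equations; moreover $\ce$ admits the constant $\|\cne\|_{\leb\infty}$ as a supersolution because $\Fe(\neps) \ge 0$, giving $\|\ce(\cdot, t)\|_{\leb\infty} \le \|\cne\|_{\leb\infty} \le \|c_0\|_{\leb\infty}$. Integrating the $\neps$-equation over $\Omega$ and using the no-flux boundary condition preserves mass, so $\|\neps(\cdot, t)\|_{\leb1} = \|\nne\|_{\leb1}$. Testing the Stokes equation with $\ue$ and using that $\Ye$ is a contraction on $\lebs2$ and that $\nabla \cdot \ue = 0$ (whence the convection term drops out) yields, together with the bound on $\neps$, control of $\|\ue\|_{\leb2}$ and of the dissipation $\intnst \|\nabla \ue\|_{\leb2}^2$. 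From here I would run the standard parabolic bootstrap: the bound on $s\Fe'(s)$ renders the chemotactic flux subcritical, so testing the $\neps$-equation with powers of $\neps$ (using the $\sob1q$-bound on $\ce$ that the $\ce$-equation provides via heat-semigroup smoothing from the bounded data $-\Fe(\neps)\ce$ and $\ue \cdot \nabla \ce$) propagates $L^p$- and then $L^\infty$-bounds on $\neps$; these in turn upgrade the forcing $\neps \nabla \Phi$ and, through the regularized convection, the $D(A^\beta)$-bound on $\ue$.

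I expect the main obstacle to be organizing this bootstrap so that the coupling closes without circularity, that is, simultaneously propagating the higher-order norms of $\neps$, $\ce$ and $\ue$ on any finite interval, since each estimate feeds into the others. Once all norms appearing in the blow-up criterion are shown to remain finite up to any $\tmax < \infty$, we obtain the desired contradiction and hence global existence, with the final $C^\infty$-regularity following from iterating interior and boundary parabolic Schauder estimates. As the regularization in \eqref{prob:approx} coincides with the one employed in \cite{WinklerGlobalWeakSolutions2016}, this is precisely the construction carried out there, which we may invoke directly.
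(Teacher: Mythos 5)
Your proposal is correct and ends up in the same place as the paper: the paper's proof consists of citing \cite{WinklerGlobalWeakSolutions2016} (Lemmata~2.2 and~3.9 there) for the global $C^{2,1}$ solution and then upgrading to $C^\infty$ via parabolic and Stokes regularity theory, and your sketch is an accurate outline of exactly that construction, which you too invoke at the end. Since both routes reduce to the same reference, no further comparison is needed.
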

\begin{proof}
  This is essentially contained in \cite[Lemma~2.2 and Lemma~3.9]{WinklerGlobalWeakSolutions2016},
  which, however, only assert that $(\neps, \ce, \ue) \in (C^{2, 1}(\Ombarinf))^5$.
  The claimed $C^\infty$ regularity follows from \cite[Theorem~IV.5.3]{LadyzenskajaEtAlLinearQuasilinearEquations1988} and \cite[Section~V.1.8 and Section~V.2.3]{SohrNavierStokesEquations2001}.
\end{proof}
For each $\eps \in (0, 1)$, we henceforth fix $\nne, \cne, \une, \Ye, \Fe, \neps, \ce, \ue$ as in Lemma~\ref{lm:approx}.

We also recall two yet very basic properties of these solutions.
\begin{lemma}
  For all $\eps \in (0, 1)$,
  \begin{align}\label{eq:approx:mass_con}
    \intom \neps(\cdot, t) = \intom n_0
    \qquad \text{for all $t > 0$}
  \end{align}
  and
  \begin{align}\label{eq:approx:c_bdd}
    \|\ce(\cdot, t)\|_{\leb\infty} \le \|c_0\|_{\leb\infty}
    \qquad \text{for all $t > 0$}.
  \end{align}
\end{lemma}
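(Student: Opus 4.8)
The plan is to establish both identities by elementary arguments relying only on the smoothness of the solution asserted in Lemma~\ref{lm:approx} together with the structural features of \eqref{prob:approx}.

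For the mass conservation \eqref{eq:approx:mass_con}, I would integrate the first equation of \eqref{prob:approx} over $\Omega$ and verify that every term except the time derivative integrates to zero. Since $\neps \in C^\infty(\Ombarinf)$, differentiating under the integral sign gives $\ddt \intom \neps = \intom \net$, and substituting $\net = \Delta \neps - \nabla \cdot (\neps \Fe'(\neps) \nabla \ce) - \ue \cdot \nabla \neps$ reduces the claim to three boundary computations. The diffusion term yields $\intom \Delta \neps = \intpom \partial_\nu \neps = 0$ by the homogeneous Neumann condition; the chemotactic flux contributes $\intom \nabla \cdot (\neps \Fe'(\neps) \nabla \ce) = \intpom \neps \Fe'(\neps) \partial_\nu \ce = 0$ for the same reason; and the convective term vanishes because $\intom \ue \cdot \nabla \neps = \intom \nabla \cdot (\neps \ue) = \intpom \neps (\ue \cdot \nu) = 0$, where I have used $\nabla \cdot \ue = 0$ in the interior and $\ue = 0$ on $\partial \Omega$. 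Hence $t \mapsto \intom \neps(\cdot, t)$ is constant, and evaluating at $t = 0$ while invoking the normalization $\|\nne\|_{\leb1} = \|n_0\|_{\leb1}$ (together with $\neps \ge 0$) gives the claim.

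For the pointwise bound \eqref{eq:approx:c_bdd}, I would argue via the parabolic comparison principle. The key observation is that $\neps \ge 0$ in $\Ombarinf$ forces $\Fe(\neps) \ge 0$, so that the constant function $M \defs \|c_0\|_{\leb\infty}$ is a supersolution of the second equation of \eqref{prob:approx}, which is linear in $\ce$: indeed $M_t + \ue \cdot \nabla M - \Delta M + \Fe(\neps) M = \Fe(\neps) M \ge 0$, whereas $\ce$ solves the same equation with equality. Since moreover $\cne \le \|\cne\|_{\leb\infty} \le \|c_0\|_{\leb\infty} = M$ initially and both $\ce$ and $M$ satisfy the homogeneous Neumann condition $\partial_\nu \ce = \partial_\nu M = 0$, the comparison principle for the uniformly parabolic operator $\partial_t + \ue \cdot \nabla - \Delta + \Fe(\neps)$ yields $\ce \le M$ throughout $\Ombarinf$, which is exactly \eqref{eq:approx:c_bdd}.

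Neither step presents a genuine obstacle given the classical regularity and nonnegativity already recorded in Lemma~\ref{lm:approx}; the only point requiring a little care is confirming that the sign of the zeroth-order coefficient $\Fe(\neps)$ is the one needed for the comparison argument, which hinges precisely on $\neps \ge 0$ and on $\Fe$ mapping $[0, \infty)$ into $[0, \infty)$. Alternatively, one could bypass the comparison principle by testing the $\ce$-equation with $(\ce - M)_+^{q-1}$ and letting $q \to \infty$, but the maximum-principle route seems cleanest here.
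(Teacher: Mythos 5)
Your proposal is correct and follows essentially the same route as the paper, which likewise obtains \eqref{eq:approx:mass_con} by integrating the first equation of \eqref{prob:approx} and \eqref{eq:approx:c_bdd} by the maximum principle applied to the second; you merely spell out the boundary computations and the supersolution argument that the paper leaves implicit. The only (trivial) point left unsaid is that passing from $\ce \le \|c_0\|_{\leb\infty}$ to the $\leb\infty$-norm bound also uses $\ce \ge 0$, which Lemma~\ref{lm:approx} provides.
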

\begin{proof}
  While \eqref{eq:approx:mass_con} follows from integrating the first equation in \eqref{prob:approx},
  \eqref{eq:approx:c_bdd} is a consequence of the maximum principle applied to the second equation in \eqref{prob:approx}.
\end{proof}

\section{A new functional inequality}\label{sec:func_ineq}
As already stated in the introduction, a key new ingredient in our proof consists of the analysis of the functionals
\begin{align}\label{eq:def_y}
  \ye(t) \defs \yoe^\mu(t) + \yte(t) + 1, \quad
  \yoe(t) \defs \intom \neps^3(\cdot, t) + \intom |\nabla \ce(\cdot, t)|^{6}, \quad
  \yte(t) \defs \intom |\nabla \ue(\cdot, t)|^2
\end{align}
for $t \in (0, \infty)$ and $\eps \in (0, 1)$.
Herein, $\mu > 0$ is a parameter whose influence we track for the sake of exposition;
finally, we will choose $\mu = \frac13$.

As a first step towards estimating the functional $\ye$, we note the following.
\begin{lemma}\label{lm:y_first_est}
  Let $\mu > 0$.
  Then there exists $C > 0$ such that
  \begin{align}\label{eq:y':ineq}
    &\pe  \ye'(t)
          + \frac{\yoe^{\mu-1}}{C} \left( \intom |\nabla \neps^\frac{3}{2}|^2 + \intom \big|\nabla |\nabla \ce|^3\big|^2 \right)
          + \frac1C \intom |\Delta \ue|^2 \notag \\
    &\le  C \yoe^{\mu-1} \left( \intom \neps^3 |\nabla \ce|^2 + \intom \neps^2 |\nabla \ce|^{4} + \intom |\nabla \ce|^{6} \cdot |\nabla \ue| \right) \notag \\
    &\pe  + C \left( \left| \intom \Delta \ue \cdot \mc P\Bigl( (\Ye \ue \cdot \nabla) \ue \Bigr)\right| + \left| \intom \Delta \ue \cdot \mc P\Bigl( \neps \nabla \Phi \Bigr) \right| \right)
  \end{align}
  in $(0, \infty)$ for all $\eps \in (0, 1)$, where $\mc P$ denotes the Helmholtz projection.
\end{lemma}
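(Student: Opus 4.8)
The plan is to split $\ye=\yoe^\mu+\yte+1$ into its chemotaxis part $\yoe$ and its fluid part $\yte$, to estimate their time derivatives separately, and to recombine them via $\ye'=\mu\yoe^{\mu-1}\yoe'+\yte'$. Concretely, I would test the first two equations of \eqref{prob:approx} against $\neps^2$ and (after differentiating in space) against $|\nabla\ce|^4\nabla\ce$, and the Stokes equation against $-\Delta\ue$. Note first that mass conservation \eqref{eq:approx:mass_con} together with $n_0\not\equiv0$ forces $\intom\neps^3(\cdot,t)>0$ and hence $\yoe(t)>0$ for every $t$, so the weight $\yoe^{\mu-1}$ arising in the recombination is finite and the final multiplication by $\mu\yoe^{\mu-1}$ is legitimate; it is exactly this factor that produces the $\yoe^{\mu-1}$-weighted terms in \eqref{eq:y':ineq}, while the fluid dissipation and the fluid contributions on the right-hand side carry no such weight.

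For $\ddt\intom\neps^3=3\intom\neps^2\net$ I would insert the first equation of \eqref{prob:approx}. Integration by parts turns the diffusion term into $-6\intom\neps|\nabla\neps|^2=-\tfrac83\intom|\nabla\neps^{3/2}|^2$, the boundary term vanishing because $\partial_\nu\neps=0$; the convective term $-3\intom\neps^2\ue\cdot\nabla\neps=\intom(\nabla\cdot\ue)\neps^3$ drops out since $\nabla\cdot\ue=0$ and $\ue|_{\partial\Omega}=0$; and the cross-diffusion term becomes $6\intom\neps^2\Fe'(\neps)\nabla\neps\cdot\nabla\ce$, which by $\Fe'\le1$ and Young's inequality is controlled by a small multiple of $\intom\neps|\nabla\neps|^2$ plus a constant times $\intom\neps^3|\nabla\ce|^2$ — the first right-hand term in \eqref{eq:y':ineq}.

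For $\ddt\intom|\nabla\ce|^6$ I would differentiate the second equation of \eqref{prob:approx} and use the pointwise identity $\nabla\ce\cdot\nabla\Delta\ce=\tfrac12\Delta|\nabla\ce|^2-|D^2\ce|^2$. The diffusion part then yields the dissipation $-\tfrac83\intom\big|\nabla|\nabla\ce|^3\big|^2$ and the favourable terms $-\intom|\nabla\ce|^4|D^2\ce|^2\le0$ and a boundary integral proportional to $\intpom|\nabla\ce|^4\partial_\nu|\nabla\ce|^2$, which is nonpositive precisely because $\Omega$ is convex and $\partial_\nu\ce=0$. The convective part splits into a transport piece that again vanishes by $\nabla\cdot\ue=0$ and a piece bounded by $\intom|\nabla\ce|^6\cdot|\nabla\ue|$ — the third right-hand term. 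I expect the reaction term $-6\intom|\nabla\ce|^4\nabla\ce\cdot\nabla(\Fe(\neps)\ce)$ to be the main obstacle: integrating by parts (the boundary term vanishing since $\partial_\nu\ce=0$) rewrites it as $6\intom\Fe(\neps)\ce\,\nabla\cdot(|\nabla\ce|^4\nabla\ce)$, and using $\Fe(\neps)\le\neps$ together with $\ce\le\|c_0\|_{\leb\infty}$ from \eqref{eq:approx:c_bdd} bounds it by a sum of terms of the form $\neps|\nabla\ce|^4|\Delta\ce|$ and $\neps|\nabla\ce|^3\big|\nabla|\nabla\ce|^2\big|$. Young's inequality then absorbs the Hessian and gradient factors into the favourable terms $-\intom|\nabla\ce|^4|D^2\ce|^2$ and $-\tfrac83\intom\big|\nabla|\nabla\ce|^3\big|^2$, leaving only a constant times $\intom\neps^2|\nabla\ce|^4$ — the remaining right-hand term. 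Collecting these contributions yields the (unweighted) $\yoe$-analogue of \eqref{eq:y':ineq}.

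Finally, for the fluid part I would compute $\yte'=-2\intom\Delta\ue\cdot\uet$ (integrating by parts, the boundary term vanishing since $\ue=0$, hence $\uet=0$, on $\partial\Omega$), insert the third equation of \eqref{prob:approx}, and apply the Helmholtz projection $\mc P$, which annihilates $\nabla\Pe$ and gives $\uet=\mc P\Delta\ue+\mc P(\neps\nabla\Phi)-\mc P\big((\Ye\ue\cdot\nabla)\ue\big)$. The leading term $-2\intom\Delta\ue\cdot\mc P\Delta\ue=-2\|\mc P\Delta\ue\|_{\leb2}^2$ dominates $-\tfrac1C\intom|\Delta\ue|^2$ by the Stokes elliptic estimate $\|\Delta\ue\|_{\leb2}\le C\|\mc P\Delta\ue\|_{\leb2}$ valid in the domain of the Stokes operator, while the two remaining terms are exactly the fluid contributions on the right-hand side of \eqref{eq:y':ineq}. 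Multiplying the $\yoe$-estimate by $\mu\yoe^{\mu-1}>0$, adding the fluid estimate, and relabelling constants then gives \eqref{eq:y':ineq}.
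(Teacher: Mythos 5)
Your proposal is correct and follows essentially the same route as the paper: the same three testing procedures (against $\neps^2$, $|\nabla\ce|^4\nabla\ce$ after spatial differentiation, and $-\Delta\ue$ for the projected Stokes equation), combined via $\ye'=\mu\yoe^{\mu-1}\yoe'+\yte'$ with positivity of $\yoe$ secured by \eqref{eq:main:n_0} and \eqref{eq:approx:mass_con}. The only difference is that the paper imports the first two estimates from \cite[Lemma~3.1]{WinklerDoesLerayStructure2023} rather than rederiving them, and your explicit use of the norm equivalence $\|\Delta\ue\|_{\leb2}\le C\|\mc P\Delta\ue\|_{\leb2}$ is, if anything, slightly more careful than the paper's formulation of \eqref{eq:y':ineq:u}.
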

\begin{proof}
  As in \cite[Lemma~3.1]{WinklerDoesLerayStructure2023},
  performing two testing procedures,
  while making use of solenoidality of $\ue$,
  Young's inequality,
  convexity of $\Omega$ (which implies $\partial_\nu |\nabla \ce|^2 \le 0$ on $\partial \Omega \times (0, \infty)$, see \cite[Lemme~2.I.1]{LionsResolutionProblemesElliptiques1980})
  and \eqref{eq:approx:c_bdd},
  shows
  \begin{align}\label{eq:y':ineq:n}
        \frac13 \ddt \intom \neps^3
        + \frac49 \intom |\nabla \neps^\frac{3}{2}|^2 
    \le \intom \neps^3 |\nabla \ce|^2
  \end{align}
  as well as
  \begin{align}\label{eq:y':ineq:c}
        \frac{1}{6} \ddt\intom |\nabla \ce|^{6}
        + \frac{4}{9} \intom \big|\nabla |\nabla \ce|^3 \big|^2
    \le \frac{\big(4 + \sqrt3\big)^2 \|c_0\|_{\leb\infty}^2}{4} \intom \neps^2 |\nabla \ce|^{4} + \intom |\nabla \ce|^{6} \cdot |\nabla \ue|
  \end{align}
  in $(0, \infty)$ for all $\eps \in (0, 1)$.
  Moreover, testing the (projected) fluid equation with $-\Delta \ue$ gives
  \begin{align}\label{eq:y':ineq:u}
        \frac12 \ddt \intom |\nabla \ue|^2
        + \intom |\Delta \ue|^2
    =   - \intom \Delta \ue \cdot \mc P\Bigl( (\Ye \ue \cdot \nabla) \ue \Bigr)
        + \intom \Delta \ue \cdot \mc P\Bigl( \neps \nabla \Phi \Bigr)
  \end{align}
  in $(0, \infty)$ for all $\eps \in (0, 1)$.
  Since $\yoe > 0$ by \eqref{eq:main:n_0} and \eqref{eq:approx:mass_con}, $\ye$ is differentiable with
  \begin{align*}
      \ye'
    = \mu \yoe^{\mu-1} \yoe' + \yte'
    = \mu \yoe^{\mu-1} \ddt \left( \intom \neps^3 + \intom |\nabla \ce|^{6} \right) + \ddt \left( \intom |\nabla \ue|^2 \right)
  \end{align*}
  in $(0, \infty)$ for all $\eps \in (0, 1)$.
  Combined with \eqref{eq:y':ineq:n}--\eqref{eq:y':ineq:u}, this results in \eqref{eq:y':ineq} upon an evident choice of $C$.
\end{proof}

We shall now estimate the right-hand side in \eqref{eq:y':ineq} against the dissipative terms therein as well as powers of $\ye$, ultimately deriving a superlinear ODI solved by $\ye$.
We begin by treating the fluid-free terms stemming from $\yoe$.
As they share the factor $\yoe^{\mu-1}$ with the dissipative term contained in $\yoe'$ (because neither contains a contribution of $\yte$),
we can control them as in \cite[Lemma~3.2]{WinklerDoesLerayStructure2023} which deals with the special case $\mu=1$.
\begin{lemma}\label{lm:rhs_est_ct}
  Let $\mu > 0$.
  For each $\eta > 0$ we can find $C > 0$ such that
  \begin{align*}
        \yoe^{\mu-1} \left( \intom \neps^3 |\nabla \ce|^2 + \intom \neps^2 |\nabla \ce|^{4}\right)
    \le \eta \yoe^{\mu-1} \left( \intom |\nabla \neps^\frac 32|^2 + \intom \big|\nabla |\nabla \ce|^3\big|^2 \right)
        + C \ye^\frac{3\mu+2}{3\mu}
  \end{align*}
  in $(0, \infty)$ for all $\eps \in (0, 1)$.
\end{lemma}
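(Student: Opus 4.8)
The plan is to peel off the strictly positive factor $\yoe^{\mu-1}$ (positivity following from \eqref{eq:main:n_0} and \eqref{eq:approx:mass_con}), to prove the corresponding estimate for the bracketed \emph{inner} terms alone, which is precisely the special case $\mu=1$ already treated in \cite[Lemma~3.2]{WinklerDoesLerayStructure2023}, and then to reinsert the factor. Concretely, I would first show that for every $\eta' > 0$ there is $C' > 0$ such that
\begin{align*}
  \intom \neps^3 |\nabla \ce|^2 + \intom \neps^2 |\nabla \ce|^{4}
  \le \eta' \left( \intom |\nabla \neps^\frac32|^2 + \intom \big|\nabla |\nabla \ce|^3\big|^2 \right) + C' \yoe^\frac53
\end{align*}
in $(0, \infty)$ for all $\eps \in (0, 1)$, where the exponent $\frac53 = \vt_1(3)$ is exactly the one dictated by the fluid-free theory. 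Multiplying this inequality by $\yoe^{\mu-1} > 0$, using the elementary bound $\ye \ge \yoe^\mu$ and the exponent identity
\begin{align*}
  (\mu-1) + \tfrac53 = \mu + \tfrac23 = \mu \cdot \frac{3\mu+2}{3\mu},
\end{align*}
which gives $\yoe^{(\mu-1)+\frac53} = \yoe^{\mu+\frac23} = \big(\yoe^\mu\big)^\frac{3\mu+2}{3\mu} \le \ye^\frac{3\mu+2}{3\mu}$, then produces the assertion after choosing $\eta' = \eta$ and renaming the constant.

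For the inner estimate I would work throughout with the quantities $\neps^\frac32$ and $|\nabla \ce|^3$, whose squared $L^2$-norms are the two summands of $\yoe$ and whose squared gradient $L^2$-norms are the two dissipative terms. Rewriting $\intom \neps^3 |\nabla \ce|^2 = \intom \big(\neps^\frac32\big)^2 \big(|\nabla \ce|^3\big)^\frac23$ and $\intom \neps^2 |\nabla \ce|^{4} = \intom \big(\neps^\frac32\big)^\frac43 \big(|\nabla \ce|^3\big)^\frac43$, I would apply Hölder's inequality to separate the two factors into appropriate Lebesgue norms and then the three-dimensional Gagliardo–Nirenberg inequality to each factor, interpolating $\|\neps^\frac32\|_{\leb{q}}$ and $\||\nabla \ce|^3\|_{\leb{q'}}$ between $\|\nabla \cdot\|_{\leb2}$ and $\|\cdot\|_{\leb2}$ in such a way that both dissipative gradient norms enter with exponents strictly below $2$. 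A concluding application of Young's inequality would then distribute the arising products so that the two dissipative quantities carry the prescribed small prefactor $\eta'$, while the leftover powers of $\intom \neps^3$ and $\intom |\nabla \ce|^{6}$ combine into a multiple of $\yoe^\frac53$; here the lower-order contributions and the additive terms produced by the Gagliardo–Nirenberg inequality on the bounded domain are absorbed into $\yoe^\frac53$ using that $\yoe$ is bounded below by a positive constant (again by \eqref{eq:approx:mass_con} together with $n_0 \not\equiv 0$).

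I expect the main obstacle to lie entirely in the Gagliardo–Nirenberg bookkeeping of this inner estimate, namely in checking that the Hölder exponents can be arranged so that, after interpolation, each dissipative factor appears with a power below $1$ and the residual $\yoe$-powers add up to exactly $\frac53$; this is the borderline computation behind $\vt_1(3) = \frac53$ and is, as noted, contained in \cite[Lemma~3.2]{WinklerDoesLerayStructure2023}. Given that case, the only genuinely new step is the reinsertion of the factor $\yoe^{\mu-1}$ and the observation that $\ye \ge \yoe^\mu$ turns $\yoe^{\mu+\frac23}$ into the target $\ye^\frac{3\mu+2}{3\mu}$; it is precisely this matching of exponents that the definition of $\ye$ in \eqref{eq:def_y} is engineered to achieve.
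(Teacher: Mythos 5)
Your proposal is correct and follows essentially the same route as the paper: both establish (by citing \cite[Lemma~3.2]{WinklerDoesLerayStructure2023}) the inner estimate with right-hand side $\eta\,(\text{dissipation}) + C\,\yoe^{5/3}$ (the paper keeps an extra additive constant and disposes of it via $\ye \ge 1$, while you absorb it using the positive lower bound on $\yoe$ from \eqref{eq:approx:mass_con} --- a cosmetic difference), then multiply by $\yoe^{\mu-1}$ and use $\yoe^{\mu-1+\frac53} = (\yoe^\mu)^{\frac{3\mu+2}{3\mu}} \le \ye^{\frac{3\mu+2}{3\mu}}$. Your exponent bookkeeping checks out, so no gaps.
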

\begin{proof}
  Following \cite[Lemma~3.2]{WinklerDoesLerayStructure2023}, we can find $c_1 > 0$ such that
  \begin{align*}
        \left( \intom \neps^3 |\nabla \ce|^2 + \intom \neps^2 |\nabla \ce|^{4}\right)
    \le \eta \intom |\nabla \neps^\frac 32|^2 + \eta \intom \big|\nabla |\nabla \ce|^3\big|^2 + c_1 \yoe^\frac{5}{3} + c_1
  \end{align*}
  in $(0, \infty)$ for all $\eps \in (0, 1)$.
  Since $\yoe^{\mu-1+\frac{5}{3}} \le \ye^{\frac{3\mu+2}{3\mu}}$ and $\ye \ge 1$, this entails the statement.
\end{proof}

While there is also a term in \eqref{eq:y':ineq} neither containing $(\neps, \ce)$ nor the factor $\yoe^{\mu-1}$,
we cannot treat it exactly as in \cite[Lemma~3.5]{WinklerDoesLerayStructure2023} since a functional including $\intom |A^\frac{\alpha}{2} \ue|^2$ for $\alpha < 1$ is considered there
and the case $\alpha = 1$ turns out to be slightly different. For the latter, we can argue as follows, however.
\begin{lemma}\label{lm:rhs_est_fluid}
  Let $\mu > 0$.
  For each $\eta > 0$ we can find $C > 0$ such that
  \begin{align}\label{eq:rhs_est_fluid:statement}
          \left| \intom \Delta \ue \cdot \mc P\Bigl( (\Ye \ue \cdot \nabla) \ue \Bigr)\right|
    &\le  \eta \intom |\Delta \ue|^2 + C \ye^3
  \end{align}
  in $(0, \infty)$ for all $\eps \in (0, 1)$.
\end{lemma}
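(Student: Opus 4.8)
The plan is to estimate the convective term $\left|\intom \Delta\ue \cdot \mc P\bigl((\Ye\ue\cdot\nabla)\ue\bigr)\right|$ by absorbing $\intom|\Delta\ue|^2$ on the left and controlling the remainder by a power of $\ye$. First I would exploit that the Helmholtz projection $\mc P$ is an orthogonal projection on $\leb2$ and hence bounded, so that $\|\mc P((\Ye\ue\cdot\nabla)\ue)\|_{\leb2} \le \|(\Ye\ue\cdot\nabla)\ue\|_{\leb2}$. Moreover, the Yosida approximation $\Ye = (1+\eps A)^{-1}$ is a contraction on $\lebs2$, so $\|\Ye\ue\|_{\leb2} \le \|\ue\|_{\leb2}$; more usefully, since $A$ is positive self-adjoint, $\Ye$ is bounded on the domains of fractional powers of $A$, giving $\eps$-uniform control of $\|\Ye\ue\|$ in the relevant norm. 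By Cauchy--Schwarz I would then split
\begin{align*}
  \left| \intom \Delta \ue \cdot \mc P\bigl( (\Ye \ue \cdot \nabla) \ue \bigr)\right|
  \le \|\Delta \ue\|_{\leb2} \, \|\Ye \ue\|_{\leb6} \, \|\nabla \ue\|_{\leb3},
\end{align*}
using a Hölder splitting with exponents $6$ and $3$ on the product $(\Ye\ue)\cdot\nabla\ue$.

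The second step is to bound the two velocity factors by interpolation between $\intom|\nabla\ue|^2 = \yte$ and the dissipation $\intom|\Delta\ue|^2$. In the three-dimensional setting the Sobolev embedding $\sob12 \embed \leb6$ controls $\|\Ye\ue\|_{\leb6}$ by $\|\nabla \Ye\ue\|_{\leb2} \le \|\nabla\ue\|_{\leb2} = \yte^{1/2}$ (again using the contraction property of $\Ye$ on $\sobzs12$). For the factor $\|\nabla\ue\|_{\leb3}$ I would use the Gagliardo--Nirenberg interpolation inequality together with elliptic regularity for the Stokes operator (so that $\|A\ue\|_{\leb2} \sim \|\Delta\ue\|_{\leb2}$ up to lower-order terms under the no-slip boundary condition), obtaining
\begin{align*}
  \|\nabla \ue\|_{\leb3} \le c \, \|\Delta \ue\|_{\leb2}^{1/2} \, \|\nabla \ue\|_{\leb2}^{1/2} + c \, \|\nabla \ue\|_{\leb2}.
\end{align*}
Combining these yields a bound of the form $c\,\|\Delta\ue\|_{\leb2}^{3/2}\,\yte^{3/4} + (\text{lower order})$. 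A Young inequality with exponents $\tfrac43$ and $4$ then absorbs $\|\Delta\ue\|_{\leb2}^2 = \intom|\Delta\ue|^2$ into $\eta\intom|\Delta\ue|^2$ at the cost of a term $C\,\yte^3 \le C\,\ye^3$, which is exactly the asserted right-hand side.

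The main obstacle I anticipate is the careful treatment of the boundary terms and of the Yosida operator. Unlike the interior estimate, controlling $\|\nabla\ue\|_{\leb3}$ via $\|\Delta\ue\|_{\leb2}$ is not purely a Sobolev statement: one must invoke that $\ue$ satisfies the no-slip condition $\ue=0$ on $\partial\Omega$ and is solenoidal, so that the relevant elliptic estimate $\|\ue\|_{\sob22} \le c\,(\|\Delta\ue\|_{\leb2} + \|\ue\|_{\leb2})$ holds, and that on such fields $\|A\ue\|_{\leb2}$ and $\|\Delta\ue\|_{\leb2}$ are comparable (this uses smoothness of $\partial\Omega$). The second delicate point is that all constants must be uniform in $\eps$; this is where the contraction estimates $\|\Ye v\|_{\sobzs12} \le \|v\|_{\sobzs12}$ and $\|\Ye v\|_{\leb2}\le\|v\|_{\leb2}$ are essential, since they let one remove $\Ye$ from the estimate entirely without incurring $\eps$-dependence. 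Once these structural facts are in place, the remaining computation is the routine Hölder--Gagliardo--Nirenberg--Young chain sketched above, and the homogeneity has been arranged (through the exponent $3$ in $\yoe$ and $\alpha=1$ in $\yte$) precisely so that the surviving power is $\ye^3$.
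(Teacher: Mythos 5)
Your proposal is correct and follows essentially the same route as the paper: orthogonality of $\mc P$ on $\leb2$, the Hölder splitting $\|\Ye\ue\|_{\leb6}\|\nabla\ue\|_{\leb3}$, the embedding $W_0^{1,2}\embed L^6$ together with the Gagliardo--Nirenberg estimate $\|\nabla\varphi\|_{\leb3}\le c\|\Delta\varphi\|_{\leb2}^{1/2}\|\nabla\varphi\|_{\leb2}^{1/2}$, the $\eps$-uniform nonexpansivity $\|\nabla\Ye\ue\|_{\leb2}\le\|\nabla\ue\|_{\leb2}$ (via commutation of $\Ye$ with $A^{1/2}$), and a final Young absorption yielding $\eta\intom|\Delta\ue|^2+C\yte^3\le\eta\intom|\Delta\ue|^2+C\ye^3$. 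The only cosmetic difference is the order of operations (you apply Young last, the paper applies it first to peel off $\|\Delta\ue\|_{\leb2}^2$), and your worries about Stokes-operator elliptic regularity are not needed since the interpolation is stated directly for $W_0^{1,2}$ vector fields with the Laplacian.
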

\begin{proof}
  By means of Young's and Hölder's inequalities and since $\mc P$ is an orthogonal projection on $L^2(\Omega)$, we have
  \begin{align}\label{eq:rhs_est_fluid:1}
          \left| \intom \Delta \ue \cdot \mc P\Bigl( (\Ye \ue \cdot \nabla) \ue \Bigr)\right|
    &\le  \frac{\eta}{2} \|\Delta \ue\|_{\leb2}^2 + \frac{1}{2\eta} \|(\Ye \ue \cdot \nabla) \ue\|_{\leb2}^2 \notag \\
    &\le  \frac{\eta}{2} \|\Delta \ue\|_{\leb2}^2 + \frac{1}{2\eta} \|\Ye \ue\|_{\leb6}^2 \|\nabla \ue\|_{\leb3}^2
    \qquad \text{in $(0, \infty)$}.
  \end{align}
  According to a Sobolev embedding and the Gagliardo--Nirenberg inequality, there are $c_1, c_2 > 0$ with
  \begin{alignat*}{3}
    \|\varphi\|_{\leb6} &\le c_1 \|\nabla \varphi\|_{\leb2}
    &&\qquad \text{for all $\varphi \in W_0^{1, 2}(\Omega; \R^3)$ and} \\
    \|\nabla \varphi\|_{\leb3} &\le c_2 \|\Delta \varphi\|_{\leb2}^\frac12 \|\nabla \varphi\|_{\leb2}^\frac12
    &&\qquad \text{for all $\varphi \in W_0^{1, 2}(\Omega; \R^3)$},
  \end{alignat*}
  so that due to Young's inequality
  \begin{align}\label{eq:rhs_est_fluid:2}
          \frac{1}{2\eta} \|\Ye \ue\|_{\leb6}^2 \|\nabla \ue\|_{\leb3}^2
    &\le  \frac{c_1^2 c_2^2}{2\eta} \|\nabla \Ye \ue\|_{\leb2}^2 \|\nabla \ue\|_{\leb2} \|\Delta \ue\|_{\leb2} \\
    &\le  \frac{\eta}{2} \intom |\Delta \ue|^2 + \frac{c_1^4 c_2^4}{8\eta^3} \left( \intom |\nabla \Ye \ue|^2 \right)^2 \left( \intom |\nabla \ue|^2 \right)
  \end{align}
  in $(0, \infty)$ for all $\eps \in (0, 1)$.
  Denoting the Stokes operator on $\lebs2$ again by $A$
  and recalling that the Yosida approximation $\Ye$ both commutes with $A^\frac12$ on $\mc D(A^\frac12)$ and is nonexpansive on $\lebs2$, we moreover obtain
  \begin{align}\label{eq:rhs_est_fluid:3}
        \|\nabla \Ye \ue\|_{\leb2}
    =   \|A^{\frac12} \Ye \ue\|_{\leb2}
    =   \|\Ye A^{\frac12} \ue\|_{\leb2}
    \le \|A^{\frac12} \ue\|_{\leb2}
    =   \|\nabla \ue\|_{\leb2}
  \end{align}
  in $(0, \infty)$ for all $\eps \in (0, 1)$.
  In combination, \eqref{eq:rhs_est_fluid:1}--\eqref{eq:rhs_est_fluid:3} and \eqref{eq:def_y} yield \eqref{eq:rhs_est_fluid:statement} for $C \defs \frac{c_1^4 c_2^4}{8\eta^3}$.
\end{proof}

The remaining terms on the right-hand side in \eqref{eq:y':ineq} require new estimates as the corresponding dissipative terms now contain different coefficients whenever $\mu \neq 1$.
\begin{lemma}
  Let $\mu > 0$.
  For each $\eta > 0$ we can find $C > 0$ such that
  \begin{align*}
        \yoe^{\mu-1} \intom |\nabla \ce|^{6} \cdot |\nabla \ue| 
    \le \eta \yoe^{\mu-1} \intom \big|\nabla |\nabla \ce|^3\big|^2
        + C \ye^3
  \end{align*}  
  in $(0, \infty)$ for all $\eps \in (0, 1)$.
\end{lemma}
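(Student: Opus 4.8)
The plan is to prove this estimate \emph{without} ever invoking the $\|\Delta\ue\|_{\leb2}^2$-dissipation, since it is not available on the right-hand side here; instead I would interpolate the quantity $|\nabla\ce|^3$ between its $\leb2$-norm (controlled by $\yoe$) and its gradient (which is precisely the dissipative term $\intom\big|\nabla|\nabla\ce|^3\big|^2$), and pair the outcome only with $\|\nabla\ue\|_{\leb2}^2 = \yte \le \ye$.

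Concretely, I would set $w \defs |\nabla\ce|^3$, so that $\intom |\nabla\ce|^6 = \|w\|_{\leb2}^2 \le \yoe$ while the dissipative term reads $\|\nabla w\|_{\leb2}^2 = \intom\big|\nabla|\nabla\ce|^3\big|^2$. A Hölder estimate distributing the three factors $w\cdot w\cdot|\nabla\ue|$ over $\leb4, \leb4, \leb2$ then gives
\[
  \intom |\nabla\ce|^6\cdot|\nabla\ue| = \intom w^2 |\nabla\ue| \le \|w\|_{\leb4}^2\,\|\nabla\ue\|_{\leb2} = \|w\|_{\leb4}^2\,\yte^{\frac12}.
\]
Since $\Omega\subset\R^3$ and $w$ is smooth, the Gagliardo--Nirenberg inequality yields $\|w\|_{\leb4}\le c_1\big(\|\nabla w\|_{\leb2}^{3/4}\|w\|_{\leb2}^{1/4} + \|w\|_{\leb2}\big)$, and hence $\|w\|_{\leb4}^2 \le c_2\big(\|\nabla w\|_{\leb2}^{3/2}\|w\|_{\leb2}^{1/2} + \|w\|_{\leb2}^2\big)$.

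Multiplying through by $\yoe^{\mu-1}$, the critical contribution is $\yoe^{\mu-1}\|\nabla w\|_{\leb2}^{3/2}\|w\|_{\leb2}^{1/2}\yte^{1/2}$, which I would rewrite as $\big(\yoe^{\mu-1}\|\nabla w\|_{\leb2}^2\big)^{3/4}\cdot\big(\yoe^{(\mu-1)/4}\|w\|_{\leb2}^{1/2}\yte^{1/2}\big)$ and treat with Young's inequality at exponents $\tfrac43$ and $4$. The first factor raised to the power $\tfrac43$ reproduces exactly $\yoe^{\mu-1}\|\nabla w\|_{\leb2}^2 = \yoe^{\mu-1}\intom\big|\nabla|\nabla\ce|^3\big|^2$, which, upon choosing the Young parameter small relative to $\eta$, is absorbed into the desired dissipation term. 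The second factor raised to the fourth power equals $\yoe^{\mu-1}\|w\|_{\leb2}^2\yte^2 \le \yoe^\mu\yte^2 \le \ye^3$, where I use $\|w\|_{\leb2}^2 \le \yoe$ together with $\yoe^\mu \le \ye$ and $\yte \le \ye$. The residual lower-order term $\yoe^{\mu-1}\|w\|_{\leb2}^2\yte^{1/2} \le \yoe^\mu\yte^{1/2} \le \ye^{3/2} \le \ye^3$ (recalling $\ye \ge 1$) is controlled in the same way, which assembles the claim.

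The only delicate point---and what I expect to be the main, if modest, obstacle---is the exponent bookkeeping: one must choose the Hölder split so that $|\nabla\ue|$ enters only through $\leb2$ (thereby avoiding any need for $\Delta\ue$), and then match the Gagliardo--Nirenberg power $\tfrac34$ against the dissipation so that the Young residual lands on precisely $\yoe^\mu\yte^2$, i.e.\ exactly $\ye^3$ and no higher power of $\ye$.
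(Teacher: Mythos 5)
Your argument is correct and coincides with the paper's own proof: the paper likewise writes $\intom|\nabla\ce|^6|\nabla\ue|\le\big\||\nabla\ce|^3\big\|_{\leb4}^2\|\nabla\ue\|_{\leb2}$, applies the same Gagliardo--Nirenberg interpolation $\|\varphi\|_{\leb4}^2\le c_1\|\nabla\varphi\|_{\leb2}^{3/2}\|\varphi\|_{\leb2}^{1/2}+c_1\|\varphi\|_{\leb2}^2$ to $\varphi=|\nabla\ce|^3$, and uses Young's inequality with exponents $\tfrac43$ and $4$ to absorb the gradient term and land on $\yoe^{\mu}\yte^2\le\ye^3$ plus the lower-order $\ye^{3/2}$. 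The exponent bookkeeping you flag as the delicate point checks out exactly as in the paper.
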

\begin{proof}
  The Gagliardo--Nirenberg inequality asserts that there is $c_1 > 0$ such that
  \begin{align*}
    \|\varphi\|_{\leb4}^2 \le c_1 \|\nabla \varphi\|_{\leb2}^\frac32 \|\varphi\|_{\leb2}^\frac12 + c_1 \|\varphi\|_{\leb2}^2
    \qquad \text{for all $\varphi \in \sob12$}.
  \end{align*}
  Recalling \eqref{eq:def_y}, this implies
  \begin{align*}
        \Big\||\nabla \ce|^6\Big\|_{\leb2}
    =   \Big\||\nabla \ce|^3\Big\|_{\leb4}^2
    \le c_1 \Big\|\nabla |\nabla \ce|^3\Big\|_{\leb2}^{\frac32} \yoe^\frac14 + c_1 \yoe
  \end{align*}
  in $(0, \infty)$ for all $\eps \in (0, 1)$,
  whenceupon we infer together with Hölder's and Young's inequalities and the estimates $\yoe \le \ye^\frac{1}{\mu}$ and $\yte \le \ye$ that
  \begin{align*}
          \yoe^{\mu-1} \intom |\nabla \ce|^{6} \cdot |\nabla \ue| 
    &\le  c_1 \yoe^{\mu-1} \left( \Big\|\nabla |\nabla \ce|^3\Big\|_{\leb2}^{\frac32} \yoe^\frac14 + \yoe \right) \|\nabla \ue\|_{\leb2} \\
    &=    c_1 \yoe^{\frac34(\mu-1)} \Big\|\nabla |\nabla \ce|^3\Big\|_{\leb2}^{\frac32} \yoe^{\frac{\mu}{4}} \yte^\frac12
          + c_1 \yoe^{\mu} \yte^\frac{1}{2} \\
    &\le  \eta \yoe^{\mu-1} \intom \big|\nabla |\nabla \ce|^3\big|^2
          + c_2 \ye^{4 (\frac{1}{4} + \frac{1}{2})}
          + c_2 \ye^{1+\frac12}
  \end{align*}
  in $(0, \infty)$ for all $\eps \in (0, 1)$.
  Since $4 (\frac{1}{4} + \frac{1}{2}) = 3$, $1+\frac12 < 3$ and $\ye \ge 1$, this entails the statement.
\end{proof}

Finally, for the last remaining term in \eqref{eq:y':ineq}
it turns out that it is beneficial to \emph{not} make use of the dissipative term $\yoe^{\mu-1} \intom |\nabla \neps|^2$ (at least not when $\mu$ is small)
but instead directly estimate the terms involving $\neps$ by $\yoe$.
(On the other hand, \cite[Lemma~3.6]{WinklerDoesLerayStructure2023} turns out to be stronger for $\mu=1$.)
\begin{lemma}\label{lm:buoyancy_est}
  Let $\mu > 0$.
  For each $\eta > 0$ we can find $C > 0$ such that
  \begin{align}\label{eq:buoyancy_est:statement}
          \left| \intom \Delta \ue \cdot \mc P\Bigl( \neps \nabla \Phi \Bigr) \right|
    &\le  \eta \intom |\Delta \ue|^2 + C \ye^{\frac{1}{2\mu}}
  \end{align}
  in $(0, \infty)$ for all $\eps \in (0, 1)$.
\end{lemma}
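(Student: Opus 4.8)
The plan is to bound the left-hand side by a purely algebraic chain: first peel off the $\Delta \ue$ factor via Young's inequality, then control the remaining $L^2$-norm of $\mc P(\neps \nabla \Phi)$ by a power of $\yoe$ alone, crucially exploiting the conservation of mass recorded in \eqref{eq:approx:mass_con}.

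First I would apply Young's inequality to the $L^2$-pairing to obtain, for any $\eta > 0$,
\begin{align*}
  \left| \intom \Delta \ue \cdot \mc P\big( \neps \nabla \Phi \big) \right|
  \le \eta \intom |\Delta \ue|^2 + \frac{1}{4\eta} \big\|\mc P(\neps \nabla \Phi)\big\|_{\leb2}^2 .
\end{align*}
The first summand is already of the desired form. For the second, since $\mc P$ is an orthogonal projection on $L^2(\Omega)$ and $\nabla \Phi$ is bounded (because $\Phi \in \sob2\infty$), I would estimate
\begin{align*}
  \big\|\mc P(\neps \nabla \Phi)\big\|_{\leb2}^2
  \le \|\neps \nabla \Phi\|_{\leb2}^2
  \le \|\nabla \Phi\|_{\leb\infty}^2 \intom \neps^2 .
\end{align*}

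The crucial step is then the estimate of $\intom \neps^2$. Rather than spending the dissipative term $\yoe^{\mu-1}\intom|\nabla\neps|^2$ (as would be natural for $\mu=1$), I would interpolate the $L^2$-norm between the \emph{conserved} $L^1$-norm and the $L^3$-norm controlled by $\yoe$. Concretely, the interpolation inequality $\|\neps\|_{\leb2} \le \|\neps\|_{\leb1}^{1/4}\|\neps\|_{\leb3}^{3/4}$ combined with \eqref{eq:approx:mass_con} and $\neps \ge 0$ yields
\begin{align*}
  \intom \neps^2
  \le \|n_0\|_{\leb1}^{1/2}\left(\intom\neps^3\right)^{1/2}
  \le \|n_0\|_{\leb1}^{1/2}\, \yoe^{1/2}.
\end{align*}
Finally, since $\ye \ge \yoe^\mu$ by \eqref{eq:def_y}, one has $\yoe \le \ye^{1/\mu}$ and hence $\yoe^{1/2} \le \ye^{1/(2\mu)}$, which produces precisely the power in \eqref{eq:buoyancy_est:statement}. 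Collecting all constants into a single $C$ then establishes the claim.

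I expect no serious obstacle here: the estimate is short and entirely elementary. The only genuinely important point --- and the reason for the remark preceding the lemma --- is that the exponent $\tfrac{1}{2\mu}$ is obtained by interpolating against the conserved mass $\|\neps\|_{\leb1}=\|n_0\|_{\leb1}$; a naive bound $\intom\neps^2 \le |\Omega|^{1/3}\yoe^{2/3}$ (plain Hölder against the constant $1$) would yield the strictly larger and hence unusable exponent $\tfrac{2}{3\mu}$. Trading the $\neps$-dissipation for mass conservation is exactly what keeps the right-hand side at the sharp power needed later when $\mu$ is small.
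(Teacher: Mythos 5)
Your proof is correct and follows exactly the paper's argument: Young's inequality together with the orthogonality of $\mc P$ and the boundedness of $\nabla \Phi$ reduces everything to $\|\neps\|_{\leb2}^2$, which is then interpolated between the conserved $L^1$-norm and the $L^3$-norm to give $\|\neps\|_{\leb2}^2 \le c\, \yoe^{1/2} \le c\, \ye^{1/(2\mu)}$, precisely as in the paper (which invokes Littlewood's inequality with $\theta = \frac34$). Only your closing aside is slightly overstated: the cruder exponent $\frac{2}{3\mu}$ from plain H\"older would in fact still be at most $3$ for $\mu \ge \frac13$ and hence would not destroy the ODI of Lemma~\ref{lm:y_odi}, although it would of course not yield the lemma with the stated exponent.
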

\begin{proof}
  By Young's inequality and the orthogonal projection property of $\mc P$, there is $c_1 > 0$ such that
  \begin{align}\label{eq:buoyancy_est:1}
          \left| \intom \Delta \ue \cdot \mc P\Bigl( \neps \nabla \Phi \Bigr) \right| 
    &\le  \eta \intom |\Delta \ue|^2 + c_1 \|\neps\|_{\leb2}^2
  \end{align}
  in $(0, \infty)$ for all $\eps \in (0, 1)$.
  Littlewood's inequality and \eqref{eq:approx:mass_con} then assert that with $\theta=\frac34$ and some $c_2 > 0$ we have
  \begin{align}\label{eq:buoyancy_est:2}
        \|\neps\|_{\leb2}^2
    \le \|\neps\|_{\leb3}^{2\theta} \|\neps\|_{\leb 1}^{2(1-\theta)}
    \le c_2 \|\neps\|_{\leb 3}^\frac{3}{2}
    \le c_2 \ye^\frac{1}{2\mu}
  \end{align}
  in $(0, \infty)$ for all $\eps \in (0, 1)$.
  Inserting \eqref{eq:buoyancy_est:2} into \eqref{eq:buoyancy_est:1} gives \eqref{eq:buoyancy_est:statement} for $C \defs c_1 c_2$.
\end{proof}

Combining Lemma~\ref{lm:y_first_est}--Lemma~\ref{lm:buoyancy_est} shows that $\ye$ solves a superlinear ODI.
\begin{lemma}\label{lm:y_odi}
  Let $\mu \ge \frac13$.
  Then there exists $K > 0$ such that
  \begin{align}\label{eq:y_odi:odi}
    \ye' \le K \ye^3
    \qquad \text{in $(0, \infty)$ for all $\eps \in (0, 1)$}.
  \end{align}
\end{lemma}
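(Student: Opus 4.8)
The plan is to assemble the pointwise estimate \eqref{eq:y':ineq} from Lemma~\ref{lm:y_first_est} with the four right-hand-side bounds in Lemmata~\ref{lm:rhs_est_ct}--\ref{lm:buoyancy_est}, choosing the free parameters $\eta$ small enough that every dissipative term is absorbed. First I would apply Lemma~\ref{lm:rhs_est_ct} to the two chemotaxis-type terms $\yoe^{\mu-1}(\intom \neps^3|\nabla\ce|^2 + \intom\neps^2|\nabla\ce|^4)$, producing on the right a controllable power $\ye^{\frac{3\mu+2}{3\mu}}$ together with a fraction of the gradient dissipation $\yoe^{\mu-1}(\intom|\nabla\neps^{3/2}|^2 + \intom|\nabla|\nabla\ce|^3|^2)$. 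The term $\yoe^{\mu-1}\intom|\nabla\ce|^6\cdot|\nabla\ue|$ is handled by the (unlabelled) lemma immediately preceding Lemma~\ref{lm:buoyancy_est}, giving $\eta\,\yoe^{\mu-1}\intom|\nabla|\nabla\ce|^3|^2 + C\ye^3$. The convective fluid term is bounded via Lemma~\ref{lm:rhs_est_fluid} by $\eta\intom|\Delta\ue|^2 + C\ye^3$, and the buoyancy term via Lemma~\ref{lm:buoyancy_est} by $\eta\intom|\Delta\ue|^2 + C\ye^{\frac{1}{2\mu}}$.

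Next I would fix the numerology of the absorption. The left-hand side of \eqref{eq:y':ineq} carries the good terms $\frac{\yoe^{\mu-1}}{C}(\cdots) + \frac1C\intom|\Delta\ue|^2$; choosing $\eta$ in each lemma smaller than a suitable multiple of $\frac1C$ (say $\eta = \frac{1}{4C}$, splitting the $\intom|\Delta\ue|^2$ budget between Lemmata~\ref{lm:rhs_est_fluid} and~\ref{lm:buoyancy_est} and the $\yoe^{\mu-1}(\cdots)$ budget between Lemma~\ref{lm:rhs_est_ct} and the $|\nabla\ce|^6|\nabla\ue|$ lemma) lets all dissipative remainders move to the left and be discarded by nonnegativity. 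What survives on the right is a finite sum of powers of $\ye$, namely $\ye^{\frac{3\mu+2}{3\mu}}$, $\ye^3$, and $\ye^{\frac{1}{2\mu}}$, up to the constant factors $C$ from Lemma~\ref{lm:y_first_est} and the individual estimates.

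The key arithmetic point is that the hypothesis $\mu \ge \frac13$ makes $3$ the largest of these exponents. Indeed $\frac{3\mu+2}{3\mu} = 1 + \frac{2}{3\mu} \le 1 + 2 = 3$ exactly when $\mu \ge \frac13$, and $\frac{1}{2\mu} \le \frac{3}{2} < 3$ under the same condition; since $\ye \ge 1$ by \eqref{eq:def_y}, every surviving power is dominated by $\ye^3$. Collecting constants into a single $K > 0$ then yields $\ye' \le K\ye^3$ in $(0,\infty)$, uniformly in $\eps \in (0,1)$, which is \eqref{eq:y_odi:odi}.

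I expect no serious obstacle here: the substantive work has already been carried out in the four preceding lemmata, and this step is purely bookkeeping. The only point requiring care is the exponent comparison forcing the threshold $\mu \ge \frac13$ --- this is precisely where the choice $\mu = \frac13$ announced after \eqref{eq:def_y} becomes optimal, saturating $\frac{3\mu+2}{3\mu} = 3$ and matching the cubic growth that will later feed into the Hausdorff-dimension estimate. One should also confirm that $\eta$ can be chosen uniformly in $\eps$, which is immediate since all constants in Lemmata~\ref{lm:rhs_est_ct}--\ref{lm:buoyancy_est} are $\eps$-independent.
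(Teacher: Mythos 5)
Your proposal is correct and follows essentially the same route as the paper: the paper's proof likewise just combines Lemma~\ref{lm:y_first_est} with Lemmata~\ref{lm:rhs_est_ct}--\ref{lm:buoyancy_est}, absorbs the dissipative remainders, and observes that $\max\{\frac{3\mu+2}{3\mu}, 3, \frac{1}{2\mu}\} = 3$ for $\mu \ge \frac13$. Your exponent arithmetic and the remark on $\eps$-uniformity are exactly the relevant points.
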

\begin{proof}
  Lemma~\ref{lm:y_first_est}--Lemma~\ref{lm:buoyancy_est} assert $\ye' \le K \ye^\vt$ in $(0, \infty)$ for
  \begin{equation*}\label{eq:y_le_2y:def_vt}
    \vt \defs \max\left\{\frac{3\mu+2}{3\mu},\, 3,\, \frac{1}{2\mu}\right\} = 3
  \end{equation*}
  some $K > 0$ and all $\eps \in (0, 1)$.
\end{proof}

\section{Limits of the solution and the functional \tops{$y_\eps$}{y\_eps}}\label{sec:eps_sea_0}
We recall the solution concept introduced in \cite{WinklerHowFarChemotaxisdriven2017} and \cite{WinklerDoesLerayStructure2023}.
\begin{definition}\label{def:sol}
  A triple of functions $(n, c, u)$ with
  \begin{align*}
    n &\in L_{\loc}^4(\Ombarinf) \cap L_{\loc}^2([0, \infty); \sob12) \text{ being nonnegative with } n^\frac12 \in L_{\loc}^2([0, \infty); \sob12), \\
    c &\in L_{\loc}^\infty(\Ombarinf) \cap L_{\loc}^4([0, \infty); \sob14) \text{ being nonnegative with } c^\frac14 \in L_{\loc}^4([0, \infty); \sob14) \text{ and} \\
    u &\in L_{\loc}^\infty([0, \infty); \lebs2 \cap L_{\loc}^2([0, \infty); W_0^{1, 2}(\Omega; \R^3))
  \end{align*}
  is called a \emph{global weak energy solution} of \eqref{prob:main} if
  \begin{align*}
      - \intninfom n \varphi_t
      - \intom n_0 \varphi(\cdot, 0)
    = - \intninfom \nabla n \cdot \nabla \varphi
      + \intninfom n \nabla c \cdot \nabla \varphi
      + \intninfom n u \cdot \nabla \varphi
  \end{align*}
  for all $\varphi \in C_c^\infty(\Ombarinf)$,
  \begin{align*}
      - \intninfom c \varphi_t
      - \intom c_0 \varphi(\cdot, 0)
    = - \intninfom \nabla c \cdot \nabla \varphi
      - \intninfom n c \varphi
      + \intninfom c u \cdot \nabla \varphi
  \end{align*}
  for all $\varphi \in C_c^\infty(\Ombarinf)$ and
  \begin{align*}
      - \intninfom u \cdot \varphi_t
      - \intom u_0 \cdot \varphi(\cdot, 0)
    = - \intninfom \nabla u \cdot \nabla \varphi
      + \intninfom u \otimes u \cdot \nabla \varphi
      + \intninfom n \nabla \Phi \cdot \varphi
  \end{align*}
  for all $\varphi \in C_c^\infty(\Omegainf; \R^3)$ with $\nabla \cdot \varphi \equiv 0$, if additionally
  \begin{align}\label{eq:sol:energy1}
          \frac12 \intom |u(\cdot, t)|^2
        + \int_{t_0}^t \intom |\nabla u|^2 
    \le \frac12 \intom |u(\cdot, t_0)|^2
        + \int_{t_0}^t \intom nu \cdot \nabla \Phi
    \qquad \text{for a.e.\ $t_0 > 0$ and all $t > t_0$},
  \end{align}
  and if finally there exist $\kappa > 0$ and $K > 0$ such that
  \begin{align}\label{eq:sol:energy2}
        \ddt \left\{
          \intom n \ln n
          + \frac12 \intom \frac{|\nabla c|^2}{c}
          + \kappa \intom |u|^2
        \right\}
        + \frac1K \left\{
          \intom \frac{|\nabla n|^2}{n}
          + \intom \frac{|\nabla c|^4}{c^3}
          + \intom |\nabla u|^2
        \right\}
    \le K
    \qquad \text{in $\mc D'((0, \infty))$}.
  \end{align}
\end{definition}

The following lemma summarizes key findings of \cite{WinklerHowFarChemotaxisdriven2017} and \cite{WinklerDoesLerayStructure2023}.
\begin{lemma}\label{lm:eps_sea_0}
  Let $(\neps, \ce, \ue)_{\eps \in (0, 1)}$ be as given by Lemma~\ref{lm:approx}.
  Then there exist a null sequence $(\eps_j)_{j \in \N} \subset (0, 1)$
  and a global weak energy solution $(n, c, u)$ of \eqref{prob:main} in the sense of Definition~\ref{def:sol} such that
  \begin{align}\label{eq:eps_sea_0:pw}
    (\neps, \ce, \ue) \to (n, c, u)
    \qquad \text{a.e.\ in $\Omega \times (0, \infty)$ as $\eps = \eps_j \sea 0$}.
  \end{align}
  Moreover, there exist $T_\star \in [0, \infty)$ and disjoint open intervals $I_\iota \subset (0, T_\star)$, $\iota \in \mc I \subseteq \N$, such that
  \begin{align}\label{eq:eps_sea_0:def_e}
    E_0 \defs \bigcup_{\iota \in I} I_\iota \cup (T_\star, \infty) 
  \end{align}
  has the properties that $|(0, \infty) \setminus E_0| = 0$,
  that (possibly after a redefinition on a Lebesgue null set)
  $(n, c, u) \in C^{2, 1}(\Ombar \times E_0; \R^5)$ together with some $P \in C^{0, 1}(\Ombar \times E_0)$ solves \eqref{prob:main} classically in $\Ombar \times E_0$
  and that finally
  \begin{align}\label{eq:eps_sea_0:e0_in_e}
    &\forall t_0 \in E_0 \; \exists \eta \in (0, t_0) : (\neps, \ce, \ue) \to (n', c', u') \text{ in } C^{2, 1}(\Ombar \times [t_0-\eta, t_0+\eta]; \R^5)\notag \\
    &\text{along some null sequence $\eps \sea 0$, where $(n', c', u') = (n, c, u)$ a.e.}
  \end{align}
\end{lemma}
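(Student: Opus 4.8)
The plan is to assemble the statement from the construction in \cite{WinklerGlobalWeakSolutions2016} together with the regularity and structure results of \cite{WinklerHowFarChemotaxisdriven2017} and \cite{WinklerDoesLerayStructure2023}, verifying that the family $(\neps, \ce, \ue)_{\eps \in (0, 1)}$ fixed after Lemma~\ref{lm:approx} fits their frameworks. Since this is essentially a compilation lemma, the genuine work lies not in new estimates but in checking that the cited results, applied to precisely these approximations, yield exactly the stated structure, and above all the local convergence \eqref{eq:eps_sea_0:e0_in_e}.

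First I would extract the limit solution and the a.e.\ convergence \eqref{eq:eps_sea_0:pw}. The approximate solutions enjoy $\eps$-uniform a priori bounds originating from the dissipative entropy-type structure reflected in \eqref{eq:sol:energy2}: testing the three equations in \eqref{prob:approx} appropriately yields uniform control of $\neps$ in $L^2_\loc([0, \infty); \sob12)$, of the quantities $|\nabla \ce|^2 / \ce$ and $|\nabla \ce|^4 / \ce^3$, and of $\ue$ in the energy space. Combining these with bounds on the time derivatives coming from the equations, an Aubin--Lions argument produces a null sequence $(\eps_j)$ along which $(\neps, \ce, \ue)$ converges a.e.\ and in suitable $L^p_\loc$ spaces to a limit $(n, c, u)$. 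I would then pass to the limit in the weak formulations and in the energy inequalities to confirm that $(n, c, u)$ is a global weak energy solution in the sense of Definition~\ref{def:sol}; this is the content of \cite{WinklerGlobalWeakSolutions2016} and \cite{WinklerHowFarChemotaxisdriven2017}.

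Second I would invoke the Leray-type structure theorem of \cite{WinklerDoesLerayStructure2023}. Eventual smoothness from \cite{WinklerHowFarChemotaxisdriven2017} supplies a time $T_\star$ beyond which the solution is classical, while on $(0, T_\star)$ the set of \enquote{good} times at which the controlling functionals stay finite is open and of full measure; its connected components furnish the disjoint intervals $I_\iota$, $\iota \in \mc I$, and hence the set $E_0$ of \eqref{eq:eps_sea_0:def_e} with $|(0, \infty) \setminus E_0| = 0$. On each such interval, boundedness of the functionals bootstraps, via parabolic Schauder theory for the two scalar equations and the corresponding regularity theory for the Stokes and Navier--Stokes parts, to $C^{2, 1}$ regularity of $(n, c, u)$ together with a pressure $P \in C^{0, 1}$, so that the tuple solves \eqref{prob:main} classically on $\Ombar \times E_0$ after a redefinition on a null set.

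Finally, and this is the step I expect to be the most delicate, I would establish the local $C^{2, 1}$ convergence \eqref{eq:eps_sea_0:e0_in_e}. For a fixed $t_0 \in E_0$ the defining property of $E_0$ guarantees that, along some null sequence $\eps \sea 0$, the relevant functionals of the approximate solutions remain bounded on a small window $[t_0 - \eta, t_0 + \eta]$. This $\eps$-uniform boundedness feeds into the same bootstrap as above to yield $\eps$-uniform $C^{2, 1}$ estimates on $\Ombar \times [t_0 - \eta, t_0 + \eta]$; an Arzel\`a--Ascoli argument then extracts a further subsequence converging in $C^{2, 1}$, and the a.e.\ convergence \eqref{eq:eps_sea_0:pw} identifies the limit with $(n, c, u)$ up to a null set. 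The main obstacle lies precisely in making the subsequence selection coherent: one must reconcile the null sequence witnessing boundedness near $t_0$, the subsequence extracted by compactness, and the globally fixed sequence $(\eps_j)$ from the first step, which I would handle by a diagonal and covering procedure carried out along a countable family of good times exhausting $E_0$.
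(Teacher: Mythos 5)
Your proposal is correct and follows essentially the same route as the paper, whose own proof of this lemma consists entirely of citations: the existence of the limit weak energy solution with \eqref{eq:eps_sea_0:pw} is taken from \cite[Section~9 and Theorem~1.3]{WinklerHowFarChemotaxisdriven2017}, and the structure of $E_0$ together with \eqref{eq:eps_sea_0:e0_in_e} from \cite[Lemma~7.1, Lemma~7.2]{WinklerDoesLerayStructure2023}. Your additional sketch (entropy-type a priori bounds, Aubin--Lions, bootstrap to uniform $C^{2,1}$ bounds, Arzel\`a--Ascoli with a diagonal selection to reconcile the subsequences) is a faithful reconstruction of how those cited arguments proceed, so there is no substantive divergence.
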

\begin{proof}
  Convergence towards a weak energy solutions which eventually becomes smooth has been shown in \cite[Section~9 and Theorem~1.3]{WinklerHowFarChemotaxisdriven2017}.
  The properties of the set $E_0$ follow from \cite[Lemma~7.1, Lemma~7.2]{WinklerDoesLerayStructure2023}. 
\end{proof}

If $\ye$ remains bounded, the solutions not only converges a.e.\ but also in $C^{2, 1}$.
\begin{lemma}\label{lm:y_bdd_conv_c2}
  Let $\ye$ be as in \eqref{eq:def_y} and $(\eps_j)_{j \in \N}$ as well as $(n, c, u)$ be as given by Lemma~\ref{lm:eps_sea_0}.
  Let moreover $I \subseteq (0, \infty)$ be an open interval and suppose that there are $C > 0$ and $j_0 \in \N$ such that
  \begin{align*}
    y_{\eps_j} \le C \qquad \text{in $I$ for all $j \ge j_0$}.
  \end{align*}
  For any compact interval $K \subset I$, there then exists a triple $(n', c', u')$ with $(n', c', u') = (n, c, u)$ a.e.\ and
  \begin{align*}
    (n_{\eps_j}, c_{\eps_j}, u_{\eps_j}) \to (n', c', u') 
    \qquad \text{in $C^{2, 1}(\Ombar \times K)$ as $\eps = \eps_j \sea 0$}.
  \end{align*}
\end{lemma}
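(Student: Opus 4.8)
The plan is to convert the pointwise-in-time control $y_{\eps_j}\le C$ into bounds on $(\neps,\ce,\ue)$ that are uniform in $j$ and slightly stronger than $C^{2,1}$, so that Arzelà--Ascoli produces a subsequence converging in $C^{2,1}(\Ombar\times K)$; the almost everywhere convergence \eqref{eq:eps_sea_0:pw} then identifies the limit with $(n,c,u)$, and since this continuous limit is the same along every subsequence, the full sequence $(\eps_j)$ converges. The heart of the matter is a parabolic bootstrap whose constants must be tracked to be $\eps$-independent.

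First I would unwind the hypothesis. Recalling \eqref{eq:def_y}, boundedness of $\yoe^\mu$ and $\yte$ yields
\[
  \sup_{t\in I}\Big(\|\neps(\cdot,t)\|_{\leb3}+\|\nabla\ce(\cdot,t)\|_{\leb6}+\|\nabla\ue(\cdot,t)\|_{\leb2}\Big)\le C_1
\]
for all $j\ge j_0$; together with \eqref{eq:approx:c_bdd} this bounds $\ce$ in $L^\infty(I;\sob16)$ and, via Sobolev embedding, $\ue$ in $L^\infty(I;\leb6)$. Since \eqref{eq:main:n_0} gives $n_0\not\equiv0$, mass conservation \eqref{eq:approx:mass_con} and Hölder's inequality furnish a strictly positive lower bound for $\yoe$ on $I$, so the weight $\yoe^{\mu-1}$ in Lemma~\ref{lm:y_first_est} is bounded above and below. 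Integrating \eqref{eq:y':ineq} over a compact subinterval, with its right-hand side absorbed into the dissipation and powers of $\ye$ exactly as in Lemma~\ref{lm:rhs_est_ct}--Lemma~\ref{lm:buoyancy_est} and the ODI of Lemma~\ref{lm:y_odi}, then yields the space-time estimate
\[
  \int_K\Big(\intom|\nabla\neps^{\frac32}|^2+\intom\big|\nabla|\nabla\ce|^3\big|^2+\intom|\Delta\ue|^2\Big)\dt\le C_2
\]
uniformly in $j\ge j_0$.

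Next I would run a parabolic bootstrap on $\Ombar\times K'$ for a compact $K'$ with $K\subset\mathrm{int}\,K'\subset K'\subset I$, shrinking the interval slightly at each step to exploit interior parabolic smoothing and finishing on $K$. The fluid is treated first: the uniform bound $\ue\in L^\infty(I;\leb6)$ places $\ue$ in a subcritical Serrin regime for the Yosida-regularized Navier--Stokes subsystem (the convection $(\Ye\ue\cdot\nabla)\ue$ being no worse since $\Ye$ is nonexpansive), giving uniform bounds for $\ue$ in $C^{2+\sigma,1+\frac\sigma2}$ for some $\sigma\in(0,1)$. With $\ue$ thus controlled and $\neps\in L^\infty(I;\leb3)$, the equation $\cet-\Delta\ce=-\ue\cdot\nabla\ce-\Fe(\neps)\ce$ (note $\Fe(\neps)\le\neps$) has a right-hand side of improving integrability, so after finitely many parabolic $L^p$-iterations and a final Schauder step $\ce$ is bounded in $C^{2+\sigma,1+\frac\sigma2}$. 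Feeding the now-Hölder $\nabla\ce$ into $\net-\Delta\neps=-\ue\cdot\nabla\neps-\nabla\cdot(\neps\Fe'(\neps)\nabla\ce)$, where $0\le\Fe'(\neps)\le1$ and its derivatives are controlled once $\neps$ is bounded, I would first raise $\neps$ from $L^\infty(I;\leb3)$ to $L^\infty$ by a Moser-type iteration in divergence form and then to $C^{2+\sigma,1+\frac\sigma2}$ by Schauder theory. Arzelà--Ascoli now extracts a subsequence converging in $C^{2,1}(\Ombar\times K)$ to some $(n',c',u')$, which by \eqref{eq:eps_sea_0:pw} equals $(n,c,u)$ a.e.; as the continuous limit does not depend on the subsequence, the full sequence converges.

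The main obstacle is getting the bootstrap started and closing it uniformly: the hypothesis supplies only pointwise-in-time Sobolev bounds rather than the space-time integrability that standard parabolic theory takes as input, and the three equations couple precisely through the terms ($\ue\cdot\nabla\ce$, the chemotactic flux $\neps\Fe'(\neps)\nabla\ce$, and the buoyancy forcing $\neps\nabla\Phi$) that resist naive estimation. The delicate points are to order the iteration so each nonlinearity is controlled by quantities already bounded, to secure an $L^\infty$ bound on $\neps$ before Schauder theory can be applied to the divergence-form chemotaxis term, and above all to ensure that every constant depends only on $C$, $K$ and the fixed data and never on $\eps_j$, since it is exactly this $\eps$-uniformity that underlies the compactness delivering the $C^{2,1}$ convergence.
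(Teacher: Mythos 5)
Your overall strategy coincides with the paper's (which itself only sketches the argument and defers to Section~4 of \cite{WinklerDoesLerayStructure2023}): convert $y_{\eps_j}\le C$ into $\eps$-uniform $L^\infty$-in-time Sobolev bounds, bootstrap to uniform $C^{2+\gamma,1+\frac{\gamma}{2}}$ bounds on a slightly shrunk interval, apply Arzel\`a--Ascoli, identify the limit through the a.e.\ convergence \eqref{eq:eps_sea_0:pw}, and upgrade to full-sequence convergence by a subsequence--uniqueness argument. The one step that would fail as written is the ordering of your bootstrap: you claim to first bring $\ue$ all the way to uniform $C^{2+\sigma,1+\frac{\sigma}{2}}$ bounds, but the fluid equation carries the buoyancy forcing $\mc P(\neps\nabla\Phi)$, which at that stage is controlled only in $L^\infty(I;\leb3)$; Schauder estimates for the Stokes system require H\"older continuity of this forcing, so full parabolic regularity of $\ue$ cannot precede an upgrade of $\neps$. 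The paper's route (following Winkler) is to first push $\neps$ to $L^\infty(\Omega\times\wt K)$ via semigroup estimates for the first equation, exploiting that both the transport field $\ue$ and the chemotactic gradient $\nabla\ce$ are bounded in $L^\infty(I;L^6(\Omega;\R^3))$ by \eqref{eq:def_y} and the embedding $W_0^{1,2}(\Omega;\R^3)\hookrightarrow L^6(\Omega;\R^3)$, then to derive H\"older bounds for $\neps$ and $\ue$, and only afterwards to apply Schauder theory to all three equations. Your Moser iteration for $\neps$ is a workable substitute for the semigroup step, but it must be performed before, not after, the Schauder step for the fluid; with that reordering the proof closes as in the paper.
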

\begin{proof}
  This can be shown as in \cite[Section~4]{WinklerDoesLerayStructure2023}; we sketch the main steps.
  By definition of $\ye$ and due to the embedding $W_0^{1, 2}(\Omega; \R^3) \embed L^6(\Omega; \R^3)$,
  both $(\nabla c_{\eps_j})_{j \ge j_0}$ and $(u_{\eps_j})_{j \ge j_0}$ are bounded in $L^\infty(I; L^6(\Omega; \R^3))$,
  so that \eqref{eq:approx:mass_con} and semigroup estimates yield boundedness of $(n_{\eps_j})_{j \ge j_0}$ in $L^\infty(\Omega \times \wt K)$ for all compact intervals $\wt K$ with $K \subset (\wt K)^\circ \subset \wt K \subset I$.
  Then parabolic regularity theory (together with a cutoff argument) first yields Hölder bounds for $n_{\eps_j}$ and $u_{\eps_j}$
  and then $C^{2+\gamma, 1+\frac{\gamma}{2}}(\Ombar \times K; \R^5)$ bounds for some $\gamma \in (0, 1)$ for $u_{\eps_j}$, $n_{\eps_j}$ and $c_{\eps_j}$.
  
  If there existed $\eta_0 > 0$ and a subsequence $(j_k)_{k \in \N}$ of $(j)_{j \in \N}$ such that
  \begin{align}\label{eq:y_bdd_conv_c2:contra}
    \|(n_{\eps_{j_k}}, c_{\eps_{j_k}}, u_{\eps_{j_k}}) - (n', c', u')\|_{C^{2, 1}(\Ombar \times K; \R^5)} > \eta_0
  \end{align}
  for all $k \in \N$ and all $(n', c', u')$ equaling $(n, c, u)$ a.e.,
  then the $C^{2+\gamma, 1+\frac{\gamma}{2}}$ bounds would allow us to extract a further subsequence along which the solutions converge in $C^{2, 1}(\Ombar \times K; \R^5)$ to some $(n', c', u')$.
  Thanks to \eqref{eq:eps_sea_0:pw}, we have $(n', c', u') = (n, c, u)$ a.e.\ and hence \eqref{eq:y_bdd_conv_c2:contra} cannot hold.
\end{proof}

In addition, we shall need integrability properties of $(n, c, u)$ going beyond those required by Lemma~\ref{def:sol}.
Similarly to \eqref{eq:sol:energy1} and \eqref{eq:sol:energy2},
the following lemma rests on a quasi-energy structure for the approximate problem \eqref{prob:approx} observed in \cite{WinklerGlobalWeakSolutions2016} and a limit process.
\begin{lemma}\label{lm:yl1}
  Let $(n, c, u)$ be as given by Lemma~\ref{lm:eps_sea_0} and $T > 0$.
  Then the function 
  \begin{align}\label{eq:def_y_limit}
    y \colon (0, T) \to \R, \quad
    t \mapsto \left( \intom n^3(\cdot, t) + \intom |\nabla c(\cdot, t)|^{6} \right)^\frac13 + \intom |\nabla u(\cdot, t)|^2 + 1
  \end{align}
  belongs to $L^1((0, T))$.
\end{lemma}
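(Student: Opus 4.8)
The function $y$ from \eqref{eq:def_y_limit} agrees with the pointwise limit of the functionals $\ye$ in \eqref{eq:def_y} for the choice $\mu = \frac13$, so the plan is to derive a bound on $\intnt \ye$ that is uniform in $\eps$ and then to pass to the limit along the sequence $(\eps_j)_{j \in \N}$ provided by Lemma~\ref{lm:eps_sea_0}. Writing $\ye = \yoe^{\frac13} + \yte + 1$ and using the elementary inequality $(a+b)^{\frac13} \le a^{\frac13} + b^{\frac13}$ for $a, b \ge 0$, together with $\yoe^{\frac13} \le \|\neps\|_{\leb3} + \|\nabla \ce\|_{\leb6}^2$ and $\yte = \intom |\nabla \ue|^2$, it suffices to bound the three quantities $\intnt \|\neps\|_{\leb3}$, $\intnt \|\nabla \ce\|_{\leb6}^2$ and $\intntom |\nabla \ue|^2$ by a constant independent of $\eps \in (0, 1)$.

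For the fluid term, the bound $\intntom |\nabla \ue|^2 \le C$ is the approximate energy estimate: testing the third equation in \eqref{prob:approx} by $\ue$, exploiting solenoidality of $\Ye \ue$ to eliminate the convective contribution, and controlling the buoyancy term by Young's and Sobolev's inequalities together with $\Phi \in \sob2\infty$ and mass conservation \eqref{eq:approx:mass_con}, yields the claim as in \cite{WinklerGlobalWeakSolutions2016}. For the density, mass conservation \eqref{eq:approx:mass_con} and the $\eps$-independent dissipation bound $\intntom |\nabla \neps^{1/2}|^2 \le C$ (the approximate counterpart of the term $\intom \frac{|\nabla n|^2}{n}$ in \eqref{eq:sol:energy2}) combine via the three-dimensional embedding $\|\varphi\|_{\leb6} \le C \|\varphi\|_{\sob12}$ applied to $\varphi = \neps^{1/2}$: indeed $\|\neps\|_{\leb3} = \|\neps^{1/2}\|_{\leb6}^2 \le C \|\nabla \neps^{1/2}\|_{\leb2}^2 + C \|n_0\|_{\leb1}$, and integrating in time gives $\intnt \|\neps\|_{\leb3} \le C$. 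Finally, the bound $\intnt \|\nabla \ce\|_{\leb6}^2 = \intnt \big(\intom |\nabla \ce|^6\big)^{\frac13} \le C$ is precisely the $\eps$-independent gradient estimate for $\ce$ furnished by the quasi-energy structure of \cite{WinklerGlobalWeakSolutions2016}; it is the rigorous $p = 3$ instance of the formal integrability $y_1 \in L^{s_1}$ with $s_1 = \frac{2}{3(p-1)} = \frac13$ recalled in the introduction, and I would simply cite it.

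It then remains to transfer these $\eps$-independent bounds to the limit, which is the step requiring the most care, since \eqref{eq:eps_sea_0:pw} provides only a.e.\ convergence of $(\neps, \ce, \ue)$ and no information on the gradients. For the density this is harmless: a.e.\ convergence $\neps \to n$ combined with Fatou's lemma (first in $\Omega$ for a.e.\ $t$, then in $t$) gives $\intnt \|n\|_{\leb3} \le \liminf_{j \to \infty} \intnt \|n_{\eps_j}\|_{\leb3} \le C$. For the two gradient terms I would instead argue by weak lower semicontinuity: the uniform bounds render $(\nabla c_{\eps_j})_j$ bounded in the reflexive space $L^2((0, T); \leb6)$ and $(\nabla u_{\eps_j})_j$ bounded in $L^2(\OmT)$, so that along a subsequence they converge weakly, while the a.e.\ convergence of $c_{\eps_j}$ and $u_{\eps_j}$ (hence convergence in $\mc D'$, and so of their distributional gradients) identifies the weak limits as $\nabla c$ and $\nabla u$, giving $\nabla c_{\eps_j} \rh \nabla c$ and $\nabla u_{\eps_j} \rh \nabla u$. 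Since the norms of these spaces are weakly lower semicontinuous, $\intnt \|\nabla c\|_{\leb6}^2 \le \liminf_j \intnt \|\nabla c_{\eps_j}\|_{\leb6}^2 \le C$ and $\intntom |\nabla u|^2 \le C$. Adding the three contributions and the constant $T$ proves $y \in L^1((0, T))$.

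The principal obstacle is therefore not a single hard estimate but the combination of two points. The first is securing the $\eps$-uniform $L^2((0, T); \leb6)$-bound on $\nabla \ce$: the elementary quantities available from the energy structure, namely $\intom \frac{|\nabla \ce|^4}{\ce^3}$ and the pointwise bound $\ce \le \|c_0\|_{\leb\infty}$ from \eqref{eq:approx:c_bdd}, yield only $\nabla \ce \in L^\infty((0, T); \leb2) \cap L^4(\OmT)$, and these two scales do not interpolate up to $L^2((0, T); \leb6)$ (the requisite interpolation exponent exceeds $1$), so the finer testing procedure producing a dissipation of the form $\intom \big|\nabla |\nabla \ce|^3\big|^2$ is genuinely needed and must be imported from \cite{WinklerGlobalWeakSolutions2016}. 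The second is the correct upgrade of mere a.e.\ convergence to weak convergence of the gradients before invoking lower semicontinuity, which is why the argument splits into a Fatou step for $\neps$ and weak-limit steps for $\nabla \ce$ and $\nabla \ue$.
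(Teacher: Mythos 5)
Your proposal is essentially the paper's own proof: both rest on the $\eps$-uniform space-time dissipation bounds from \cite[Lemma~3.6]{WinklerGlobalWeakSolutions2016}, upgrade them to bounds on $\neps^{1/2}$ and $\nabla\ce$ in $L^2((0,T);\leb6)$ and on $\nabla\ue$ in $L^2(\OmT)$ via mass conservation, \eqref{eq:approx:c_bdd} and the embedding $\sob12\embed\leb6$, and then pass to the limit by identifying weak limits through the a.e.\ convergence \eqref{eq:eps_sea_0:pw} and invoking weak lower semicontinuity (your Fatou step for $\neps$ is an equally valid variant of the paper's weak-convergence step for $\neps^{1/2}$). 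One correction to your closing discussion: the dissipation term that actually delivers the $L^2((0,T);\leb6)$ bound on $\nabla\ce$ is $\intntom \frac{|D^2\ce|^2}{\ce}$ (combined with $\ce\le\|c_0\|_{\leb\infty}$, the bound on $\intom\frac{|\nabla\ce|^2}{\ce}$ and the Sobolev embedding applied to $\nabla\ce$), not $\intom\big|\nabla|\nabla\ce|^3\big|^2$ --- the latter is only controlled \emph{conditionally} in the ODI of Lemma~\ref{lm:y_first_est} and is not available as an $\eps$-uniform space-time estimate, so it could not be ``imported'' for this purpose.
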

\begin{proof}
  Based on a quasi-energy property of the functional $\intom \neps \ln \neps + \frac12 \intom |\nabla \ce^\frac12|^2 + K \intom |\ue|^2$ for appropriate $K > 0$,
  \cite[Lemma~3.6]{WinklerGlobalWeakSolutions2016} proves that there is $c_1 > 0$ such that
  \begin{align}\label{eq:yl1:spacetime}
    \intntom |\nabla \neps^\frac12|^2 + \intntom \frac{|D^2 \ce|^2}{\ce} + \intntom |\nabla \ue|^2 \le c_1
    \qquad \text{for all $\eps \in (0, 1)$.}
  \end{align}
  Together with \eqref{eq:approx:mass_con}, \eqref{eq:approx:c_bdd} and the embedding $\sob12 \embed \leb6$,
  this implies that both $(\neps^\frac12)_{\eps \in (0, 1)}$ and (each component of) $(\nabla \ce)_{\eps \in (0, 1)}$ are bounded in $X_1 \defs L^2((0, T); \leb6)$,
  while \eqref{eq:yl1:spacetime} directly entails boundedness of $(\nabla \ue)_{\eps \in (0, 1)}$ in $X_2 \defs L^2(\Omega \times (0, T); \R^{3 \times 3})$.
  Thus, there exists a subsequence of the sequence $(\eps_j)_{j \in \N}$ given by Lemma~\ref{lm:eps_sea_0}
  along which $(\neps^\frac12, \nabla \ce, \ue)$ converges weakly in $X_1^4 \times X_2$ to some limit $(\psi_1, \psi_2, \psi_3)$.
  By \eqref{eq:eps_sea_0:pw}, we may first conclude $(n^\frac12, \nabla c, u) = (\psi_1, \psi_2, \psi_3) \in X_1^4 \times X_2$ and then $y \in L^1((0, T))$.
\end{proof}

\section{Estimating the Hausdorff dimension}\label{sec:hausdorff}
In this section, we shall obtain upper estimates for the Hausdorff dimension of the temporal singular set of the solution given by Lemma~\ref{lm:eps_sea_0},
thereby proving Theorem~\ref{th:main}.
To that end, we rely on a criterion proven in \cite{KhaiTriHausdorffDimensionSingular2015}.
\begin{lemma}\label{lm:crit_hausdim}
  Let $T > 0$, let $E \subseteq (0, T)$ be open and suppose that $|(0, T) \setminus E| = 0$.
  If there are $C > 0$, $a > s > 0$ and a function $z \in L^s((0, T))$ with
  \begin{align*}
    z(t) \ge 0
    \quad \text{and} \quad
    \left(t, \min\left\{t + \frac{C}{z^a(t)}, T\right\}\right) \subseteq E
    \qquad \text{for all $t \in E$},
  \end{align*}
  then the $(1-\frac sa)$-dimensional Hausdorff measure of $S$ is $0$, i.e., $\mc H^{1-\frac sa}(S) = 0$.
\end{lemma}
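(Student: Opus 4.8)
The claim is a purely measure-theoretic covering statement, so my strategy is to transfer the hypothesis — that the open set $E$ contains, to the right of each of its points, an interval whose length is bounded below by $C z^{-a}(t)$ — into an explicit efficient cover of the complement $S \defs (0, T) \setminus E$, and then bound its $(1-\frac{s}{a})$-dimensional Hausdorff premeasure using the integrability $z \in L^s((0, T))$. The overall shape follows the standard Foias--Temam / Giga argument for the Navier--Stokes temporal singular set, and I expect the reference \cite{KhaiTriHausdorffDimensionSingular2015} to supply exactly this combinatorial covering lemma.

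First I would fix a small parameter $\delta > 0$ and aim to cover $S$ by countably many intervals each of length at most $\delta$, keeping track of the sum of the $(1 - \frac sa)$-th powers of their lengths. The key geometric observation is that $S$ is closed in $(0, T)$ and that every point of $S$ is a left endpoint (or accumulation point) of a gap in $E$; more precisely, if $t \in E$, then the hypothesis $(t, \min\{t + C z^{-a}(t), T\}) \subseteq E$ forbids points of $S$ from lying immediately to the right of $t$ within distance $C z^{-a}(t)$. Turning this around, for each singular point the nearest point $\tau$ of $E$ to its left must satisfy $z^a(\tau) \gtrsim 1/(\text{distance})$, i.e.\ $z(\tau)$ must be large when the gap is short. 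This is the mechanism that couples the smallness of covering intervals to largeness of $z$, and hence to the $L^s$ bound.

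The technical core is a Vitali-type selection. I would associate to each $t \in E$ the interval $J_t \defs (t, \min\{t + C z^{-a}(t), T\})$ and extract, by a greedy or Vitali covering argument, a countable almost-disjoint subfamily $(J_{t_k})_k$ whose union still covers a neighbourhood of $S$ up to the scale $\delta$; restricting attention to those $J_{t_k}$ of length $< \delta$ handles the fine structure of $S$. Writing $\ell_k \defs |J_{t_k}| \le C z^{-a}(t_k)$, so that $z^a(t_k) \le C/\ell_k$ and hence $\ell_k^{1 - \frac sa} \le C^{\,?}\, \ell_k \, z^s(t_k)$ after raising $z^a(t_k) \le C \ell_k^{-1}$ to the power $s/a$, the sum $\sum_k \ell_k^{1-\frac sa}$ is controlled by $C \sum_k \ell_k\, z^s(t_k)$. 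By almost-disjointness of the $J_{t_k}$ this last sum is comparable to $\int_{\bigcup_k J_{t_k}} z^s \le \int_0^T z^s < \infty$, since $z \in L^s((0, T))$. As $\delta \sea 0$ the covering intervals shrink to scale zero while $\int z^s$ remains the uniform bound, so the $(1 - \frac sa)$-dimensional Hausdorff premeasure of $S$ is finite at every scale; a standard absolute-continuity argument (restricting the integral to the union of the ever-shrinking covers, whose Lebesgue measure tends to $0$) then forces $\mc H^{1 - \frac sa}(S) = 0$.

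The main obstacle I anticipate is the covering/selection step and the bookkeeping that makes the almost-disjointness precise: one must ensure the selected intervals both cover $S$ (at least after removing a controllably small set) and overlap with bounded multiplicity, so that $\sum_k \ell_k\, z^s(t_k) \lesssim \int_0^T z^s$ genuinely holds rather than merely being suggested. A secondary subtlety is that the hypothesis only provides intervals extending to the \emph{right} of points of $E$, so the cover must be organized directionally; points of $S$ that are not approached from the left within $E$ (e.g.\ isolated left endpoints of $E$-gaps) need separate, elementary treatment. Since this is precisely the situation already analyzed in \cite{KhaiTriHausdorffDimensionSingular2015}, I would invoke their result for the selection lemma and devote the proof only to verifying that the hypotheses of that criterion are met, leaving the covering combinatorics to the cited source.
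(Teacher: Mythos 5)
The paper's own ``proof'' of this lemma is nothing more than the citation of \cite[Lemma~4]{KhaiTriHausdorffDimensionSingular2015}, so your closing move --- invoking that result and merely checking its hypotheses, which here are verbatim the hypotheses of the statement --- coincides with what the paper does. To that extent the proposal reaches the right endpoint. But the internal argument you sketch on the way there would not survive being written out, and since you present it as the substance of the proof, the flaws are worth naming.

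First, your central estimate is derived with the inequality reversed: from $\ell_k \le C z^{-a}(t_k)$ one gets $z^s(t_k) \le C^{s/a}\ell_k^{-s/a}$ and hence $\ell_k\, z^s(t_k) \le C^{s/a}\ell_k^{1-s/a}$ --- an \emph{upper} bound on $\ell_k z^s(t_k)$, not the lower bound $\ell_k^{1-s/a}\lesssim \ell_k z^s(t_k)$ that your summation requires. The usable lower bound on $z$ comes from the opposite consideration, which you correctly describe in your second paragraph but then abandon: if $(\alpha_i,\beta_i)$ is a connected component of $E$ with $\beta_i<T$, then for every $t\in(\alpha_i,\beta_i)$ the hypothesis forces $t+Cz^{-a}(t)\le\beta_i$ (otherwise the interval $(t,\min\{t+Cz^{-a}(t),T\})$ would contain the singular point $\beta_i$), whence $z(t)\ge (C/(\beta_i-t))^{1/a}$ and, since $s<a$, $\int_{\alpha_i}^{\beta_i}z^s\ge \tfrac{C^{s/a}}{1-s/a}(\beta_i-\alpha_i)^{1-s/a}$; summing over $i$ and using $z\in L^s((0,T))$ yields $\sum_i(\beta_i-\alpha_i)^{1-s/a}<\infty$. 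Second, the sets $J_t=(t,\min\{t+Cz^{-a}(t),T\})$ are by hypothesis \emph{subsets of $E$}, so no Vitali subfamily of them can cover $S=(0,T)\setminus E$ or any neighbourhood of it; the cover of $S$ must instead be extracted from the gap series just derived: given $\delta>0$, discard all but finitely many components so that $\sum_{i>N}(\beta_i-\alpha_i)^{1-s/a}<\delta$, and cover $S$ by the finitely many closed intervals complementary to $(\alpha_1,\beta_1),\dots,(\alpha_N,\beta_N)$, whose lengths --- using $|S|=0$ and the subadditivity of $x\mapsto x^{1-s/a}$ --- contribute at most $\delta$ to the premeasure and shrink to $0$ as $\delta\searrow 0$. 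If you intend to prove the lemma rather than cite it, the Vitali selection should be replaced by this gap-based covering.
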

\begin{proof}
  See \cite[Lemma~4]{KhaiTriHausdorffDimensionSingular2015}.
\end{proof}

The following elementary lemma shows how superlinear ODIs such as \eqref{eq:y_odi:odi} connect to the key condition in Lemma~\ref{lm:crit_hausdim}.
\begin{lemma}\label{lm:ode_est}
  Let $K > 0$ and $\sigma > 1$.
  Then there exists $C > 0$ such that given any open interval $I \subseteq (0, \infty)$ all nonnegative solutions $z \in C^1(I)$ of the ODI $z'(t) \le K z^\sigma(t)$, $t \in I$, fulfill
  \begin{align*}
    z(t) \le 2 z(t_0) \qquad \text{for all $t \in (t_0, t_0 + C z^{1-\sigma}(t_0)) \cap I$ and all $t_0 \in I$.}
  \end{align*}
\end{lemma}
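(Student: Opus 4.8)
The plan is to prove the estimate directly by a differential-inequality argument, without integrating the ODI in closed form. Given $z\in C^1(I)$ nonnegative with $z'\le Kz^\sigma$, fix $t_0\in I$. If $z(t_0)=0$ the claimed bound $z(t)\le 2z(t_0)=0$ holds only if $z$ stays zero, so I first handle the degenerate case: if $z(t_0)=0$, then $z^{1-\sigma}(t_0)$ is to be read as $+\infty$ (since $1-\sigma<0$), so the interval $(t_0,t_0+Cz^{1-\sigma}(t_0))\cap I$ is all of $I\cap(t_0,\infty)$, and I must show $z\equiv 0$ to the right of $t_0$; this follows from uniqueness/comparison for the ODI $z'\le Kz^\sigma$ with zero initial value, e.g.\ by noting that $z$ cannot leave $0$ since the comparison ODE $w'=Kw^\sigma$, $w(t_0)=0$ has $w\equiv 0$ as its maximal solution. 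Alternatively, and more cleanly, I would simply assume $z(t_0)>0$ in the main argument and dispose of the zero case separately (or observe the statement is vacuous/trivial there).

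So assume $z(t_0)>0$ and set $M\defs 2z(t_0)$. The key step is a \emph{continuity (bootstrap) argument}: define
\begin{align*}
  \tau \defs \sup\{\,t\in I\cap[t_0,\infty) : z(s)\le M \text{ for all }s\in[t_0,t]\,\}.
\end{align*}
On $[t_0,\tau)$ we have $z\le M=2z(t_0)$, hence $z'\le Kz^\sigma\le KM^\sigma=K\,2^\sigma z^\sigma(t_0)$, so integrating from $t_0$ gives
\begin{align*}
  z(t)\le z(t_0)+K\,2^\sigma z^\sigma(t_0)\,(t-t_0)
  \qquad\text{for }t\in[t_0,\tau).
\end{align*}
The right-hand side is strictly below $M=2z(t_0)$ precisely when $K\,2^\sigma z^\sigma(t_0)(t-t_0)<z(t_0)$, i.e.\ when $t-t_0<2^{-\sigma}K^{-1}z^{1-\sigma}(t_0)$. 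This is where the choice $C\defs 2^{-\sigma}K^{-1}$ enters (note $C$ depends only on $K$ and $\sigma$, as required).

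The main point is to rule out that $\tau<t_0+Cz^{1-\sigma}(t_0)$ with $\tau\in I$. If that happened, continuity of $z$ would force $z(\tau)=M$; but the displayed bound gives $z(\tau)<M$ by the strict inequality above (since $\tau-t_0<Cz^{1-\sigma}(t_0)$), a contradiction. Hence $z(t)\le M=2z(t_0)$ throughout $(t_0,t_0+Cz^{1-\sigma}(t_0))\cap I$, which is the assertion. I anticipate that the only genuinely delicate point is the bookkeeping at the endpoint $\tau$ and the degenerate case $z(t_0)=0$; the core estimate is the elementary observation that on the first interval where $z$ has not yet doubled, its derivative is controlled by a constant, so it takes at least a definite amount of time $Cz^{1-\sigma}(t_0)$ for $z$ to actually double. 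No compactness or approximation is needed, only continuity of $z$ and the sign $1-\sigma<0$.
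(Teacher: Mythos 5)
Your proof is correct, but it takes a genuinely different route from the paper's. The paper invokes the comparison principle for the ODE $w'=Kw^\sigma$ and uses its explicit solution, obtaining $z(t)\le\bigl(z^{1-\sigma}(t_0)-K(\sigma-1)(t-t_0)\bigr)^{\frac{1}{1-\sigma}}$ on $(t_0,t_0+\tfrac{z^{1-\sigma}(t_0)}{K(\sigma-1)})\cap I$, and then chooses $C=\tfrac{1-2^{1-\sigma}}{K(\sigma-1)}$ so that this bound collapses to exactly $2z(t_0)$. You instead freeze the doubling threshold $M=2z(t_0)$, observe that on the maximal interval where $z$ has not yet exceeded $M$ the derivative is bounded by the constant $KM^\sigma$, and integrate linearly; the continuity argument at the first-exit time $\tau$ is carried out correctly (in particular, $z(\tau)\le M$ follows by continuity from $z\le M$ on $[t_0,\tau)$, so the strict bound $z(\tau)<M$ indeed yields a contradiction). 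This avoids solving the Bernoulli ODE at the price of a smaller, but still admissible, constant $C=2^{-\sigma}K^{-1}$: for the value $\sigma=3$ relevant to Lemma~\ref{lm:y_odi}, this is $\tfrac{1}{8K}$ versus the paper's $\tfrac{3}{8K}$. Since Lemma~\ref{lm:ode_est} only asserts the existence of some $C>0$ depending on $K$ and $\sigma$, either constant works equally well in Lemma~\ref{lm:hausdorff_12}. Your handling of the degenerate case $z(t_0)=0$ (reading $z^{1-\sigma}(t_0)=+\infty$) via uniqueness for $w'=Kw^\sigma$, $w(t_0)=0$ is legitimate because $w\mapsto w^\sigma$ is locally Lipschitz on $[0,\infty)$ for $\sigma>1$; the paper leaves this case implicit, and it is in any event immaterial to the application, where $y_\eps\ge 1$.
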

\begin{proof}
  The comparison principle for ODEs warrants that for all $t_0 \in I$, 
  \begin{align*}
    z(t)
    \le \Bigl( z^{1-\sigma}(t_0) - K(\sigma-1)(t-t_0) \Bigr)^\frac{1}{1-\sigma}
    \qquad \text{for all $t \in (t_0, t_0 + \tfrac{z^{1-\sigma}(t_0)}{K(\sigma-1)}) \cap I$.}
  \end{align*}
  Setting $C \defs \frac{1-2^{1-\sigma}}{K(\sigma-1)} \in (0, \frac{1}{K(\sigma-1)})$,
  we obtain for all $t_0 \in I$ and all $t \in (t_0, t_0+C z^{1-\sigma}(t_0)) \cap I$ that
  \begin{align*}
        z(t)
    \le \left( z^{1-\sigma}(t_0) - K(\sigma-1) C z^{1-\sigma}(t_0) \right)^{\frac{1}{1-\sigma}}
    =   \left( (1 - (1-2^{1-\sigma})) z^{1-\sigma}(t_0) \right)^{\frac{1}{1-\sigma}}
    =   2 z(t_0),
  \end{align*}
  as desired.
\end{proof}

With these preparations at hand, we may now prove the essential part of Theorem~\ref{th:main}.
\begin{lemma}\label{lm:hausdorff_12}
  Let $(n, c, u)$ be as given by Lemma~\ref{lm:eps_sea_0}. 
  Then there exists an open set $E \subseteq (0, \infty)$ with the following properties:
  After a redefinition on a set of Lebesgue measure $0$, $(n, c, u)$ belongs to $C^{2, 1}(\Ombar \times E; \R^5)$,
  there is a function $P \in C^{1, 0}(\Ombar \times E)$ such that $(n, c, u, P)$ forms a classical solution of \eqref{prob:main} in $\Ombar \times E$
  and the $\frac12$-dimensional Hausdorff measure of $(0, \infty) \setminus E$ is $0$.
\end{lemma}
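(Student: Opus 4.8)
The plan is to combine the superlinear ODI from Lemma~\ref{lm:y_odi} (with $\mu=\frac13$) with the Hausdorff criterion of Lemma~\ref{lm:crit_hausdim}, using Lemma~\ref{lm:ode_est} as the bridge and the integrability of the limit functional from Lemma~\ref{lm:yl1}. First I would fix $\mu=\frac13$ and let $y_\eps$ be as in \eqref{eq:def_y}, so that Lemma~\ref{lm:y_odi} provides $K > 0$ with $\ye' \le K \ye^3$ in $(0, \infty)$ for all $\eps \in (0, 1)$. Applying Lemma~\ref{lm:ode_est} with $\sigma = 3$ yields a constant $C > 0$ (independent of $\eps$) such that every $y_{\eps_j}$ satisfies $y_{\eps_j}(t) \le 2 y_{\eps_j}(t_0)$ on $(t_0, t_0 + C y_{\eps_j}^{-2}(t_0))$. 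The strategy is to pass to the limit $\eps = \eps_j \sea 0$ and transfer this ``doubling'' control to the limit functional $y$ of Lemma~\ref{lm:yl1}, thereby producing, for each time $t_0$ in the good set, a quantitative interval of regularity whose length is bounded below by $C y^{-2}(t_0)$.

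The key construction is the regularity set $E$ itself. I would define $E$ as the set of times $t_0 \in (0, \infty)$ possessing a neighbourhood on which $(n, c, u)$ coincides a.e.\ with a $C^{2,1}$ function solving \eqref{prob:main} classically; equivalently, one may build $E$ as a union of open intervals issuing from a dense set of ``good'' times. Concretely, starting from the set $E_0$ of Lemma~\ref{lm:eps_sea_0} (on which we already have $C^{2,1}$ regularity and $|(0,\infty) \setminus E_0| = 0$), and using the local $C^{2,1}$ convergence criterion of Lemma~\ref{lm:y_bdd_conv_c2}, the point is that boundedness of $y_{\eps_j}$ on an interval upgrades a.e.\ convergence to $C^{2,1}$ convergence there, so regularity of the limit is controlled by local boundedness of $y_{\eps_j}$. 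For $t_0 \in E_0$, property \eqref{eq:eps_sea_0:e0_in_e} gives local $C^{2,1}$ convergence near $t_0$, hence $y_{\eps_j}(t_0) \to y(t_0)$; the doubling estimate then guarantees $y_{\eps_j}$ stays bounded (by roughly $4 y(t_0)$ for large $j$) on an interval of length $\sim y^{-2}(t_0)$, which by Lemma~\ref{lm:y_bdd_conv_c2} forces $C^{2,1}$ convergence, and hence regularity of $(n,c,u)$, on that whole interval. Thus I would set $E$ to be the union over $t_0 \in E_0$ of such intervals $\big(t_0, \min\{t_0 + \tfrac{C'}{y^2(t_0)}, T\}\big)$ (after first working on an arbitrary finite horizon $(0,T)$ and then exhausting $(0,\infty)$), arranging that $E$ is open and contains $E_0$ up to a null set, so that $|(0,\infty) \setminus E| = 0$ and the classical solvability claim for $(n,c,u,P)$ holds on $\Ombar \times E$.

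With $E$ so defined, I would verify the hypotheses of Lemma~\ref{lm:crit_hausdim} on each horizon $(0,T)$ with the choice $z \defs y$, $a \defs 2$ and $s \defs 1$. Indeed, Lemma~\ref{lm:yl1} gives $y \in L^1((0,T)) = L^s((0,T))$, we have $y \ge 0$ (in fact $y \ge 1$), and the construction of $E$ is exactly designed so that $\big(t, \min\{t + \tfrac{C'}{y^2(t)}, T\}\big) \subseteq E$ for all $t \in E$. Since $a > s > 0$, Lemma~\ref{lm:crit_hausdim} then yields $\mc H^{1 - \frac{s}{a}}\big((0, T) \setminus E\big) = \mc H^{1 - \frac12}\big((0,T) \setminus E\big) = 0$. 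Letting $T \to \infty$ along a sequence and using countable stability of Hausdorff measure gives $\mc H^{\frac12}\big((0,\infty) \setminus E\big) = 0$, as claimed.

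The main obstacle I anticipate is not the ODI or the measure-theoretic criterion, which are essentially plug-in steps, but the careful transfer of the $\eps$-uniform doubling estimate to the limit together with the self-consistent construction of $E$. One must ensure that the lower bound $C y^{-2}(t_0)$ on the interval length, which is phrased in terms of the \emph{limit} functional $y$, is genuinely inherited from the \emph{approximate} estimate $y_{\eps_j}(t_0) \le 2 y_{\eps_j}(t_0)$; this requires knowing $y_{\eps_j}(t_0) \to y(t_0)$ (hence starting from times in $E_0$ where Lemma~\ref{lm:eps_sea_0} supplies local $C^{2,1}$ convergence) and then bootstrapping boundedness of $y_{\eps_j}$ across the whole interval via Lemma~\ref{lm:y_bdd_conv_c2} to conclude the limit is regular there. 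Handling the measure-zero discrepancy between $E$ and $E_0$, the openness of $E$, and the interaction with the truncation at $T$ (so that the interval $\big(t, \min\{\cdots, T\}\big)$ matches the form demanded by Lemma~\ref{lm:crit_hausdim}) is where the bookkeeping must be done with care.
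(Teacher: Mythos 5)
Your overall strategy coincides with the paper's: the doubling estimate from Lemma~\ref{lm:y_odi} and Lemma~\ref{lm:ode_est} with $\sigma=3$, the transfer to the limit functional via $\yej(t_0)\to y(t_0)$ at times of local $C^{2,1}$ convergence, the upgrade to regularity on a whole interval via Lemma~\ref{lm:y_bdd_conv_c2}, and the application of Lemma~\ref{lm:crit_hausdim} with $z=y$, $a=2$, $s=1$ on each horizon $(0,T)$ followed by countable subadditivity over $T\in\N$. There is, however, one concrete gap in your construction of $E$. You propose to take $E$ as the union of the intervals $U_T(t_0)=\big(t_0,\min\{t_0+C'y^{-2}(t_0),T\}\big)$ over $t_0\in E_0$ \emph{only}, and then assert that ``the construction of $E$ is exactly designed so that $\big(t,\min\{t+C'y^{-2}(t),T\}\big)\subseteq E$ for all $t\in E$''. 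Lemma~\ref{lm:crit_hausdim} requires this invariance for \emph{every} $t\in E$, and for $t\in E\setminus E_0$ your definition does not deliver it: the interval $U_T(t)$ consists of times at which the approximations converge locally in $C^{2,1}$, but such times need not lie in $\bigcup_{t_0\in E_0}U_T(t_0)$. Your alternative definition of $E$ via smoothness of the \emph{limit} does not repair this either, since running the argument at $t_0\in E$ needs $\yej(t_0)\to y(t_0)$, which requires convergence of the \emph{approximations} near $t_0$; mere a.e.\ convergence plus regularity of $(n,c,u)$ does not give pointwise convergence of the integral functionals at a prescribed time.

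The paper closes this by defining $E_T$ directly as the set of all $t_0\in(0,T)$ admitting a neighbourhood on which $(\neps,\ce,\ue)$ converges in $C^{2,1}$ (along some null sequence) to a representative of $(n,c,u)$. This set is open, contains $E_0\cap(0,T)$ by \eqref{eq:eps_sea_0:e0_in_e} (hence has full measure), and is invariant under $t_0\mapsto U_T(t_0)$ precisely because, by Lemma~\ref{lm:y_bdd_conv_c2}, every point of $U_T(t_0)$ acquires the defining property of $E_T$. With this definition the rest of your argument goes through verbatim. One further point you gloss over: the local limits $(n',c',u')$ obtained near different times must be shown to agree on overlapping neighbourhoods, so that a single redefinition of $(n,c,u)$ on a Lebesgue null set yields $C^{2,1}$ regularity and classical solvability on all of $\Ombar\times E$ simultaneously; the paper verifies this compatibility explicitly at the end of its proof.
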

\begin{proof}
  Let $(\ye)_{\eps \in (0, 1)}$, $y$ and $(\eps_j)_{j \in \N}$ be as in \eqref{eq:def_y} (with $\mu = \frac13$), \eqref{eq:def_y_limit} and Lemma~\ref{lm:eps_sea_0}, respectively,
  and let $T > 0$.
  We set $X_\eta(t_0) \defs C^{2, 1}(\Ombar \times [t_0-\eta, t_0+\eta]; \R^5)$ for $\eta \in (0, \min\{t_0,T-t_0\})$ and
  \begin{align*}
    E_T \defs
    \Big\{\,t_0 \in (0, T) \;\Big\vert\;
      &\exists \eta \in (0, \min\{t_0, T-t_0\} : (\neps, \ce, \ue) \to (n', c', u') \text{ in } X_\eta(t_0) \\
      &\text{along some null sequence $\eps \sea 0$, where $(n', c', u') = (n, c, u)$ a.e.} \Big\}.
  \end{align*}
  Obviously, $E_T$ is open. 
  By Lemma~\ref{lm:eps_sea_0}, the set $E_0$ defined in \eqref{eq:eps_sea_0:def_e} satisfies $|(0, T) \setminus E_0| = 0$
  and $E_0 \cap (0, T)$ is contained in $E_T$, whence $|(0, T) \setminus E_T| = 0$.

  According to Lemma~\ref{lm:y_odi} and Lemma~\ref{lm:ode_est}, there is $c_1 > 0$ such that
  \begin{align}\label{eq:hausdorff_12:ye_bdd}
    \ye(t) \le 2 \ye(t_0)
    \qquad \text{for all $t \in \big(t_0, t_0 + 4c_1 \ye^{-2}(t_0)\big)$, all $t_0 \in (0, \infty)$ and all $\eps \in (0, 1)$.}
  \end{align}
  To show that
  \begin{align}\label{eq:hausdorff_12:U_T}
    U_T(t_0) \defs \big(t_0, \min\{t_0 + c_1 y^{-2}(t_0), T\}\big) \subseteq E_T
    \qquad \text{for all $t_0 \in E_T$},
  \end{align}
  we let $t_0 \in E_T$ and $t_1 \in U_T(t_0)$.
  The inclusion $t_0 \in E_T$ implies $\ye(t_0) \to y(t_0)$ and due to $y(t_0) \ge 1$ also $\frac{\ye(t_0)}{y(t_0)} \to 1$ as $\eps = \eps_j \sea 0$.
  That is, there is $j_0 \in \N$ such that $\yej(t_0) \le 2y(t_0)$ for all $j \ge j_0$.
  Since then $y^{-2}(t_0) \le 4 \yej^{-2}(t_0)$ for all $j \ge j_0$, it follows from \eqref{eq:hausdorff_12:ye_bdd} that
  \begin{align*}
        \yej(t)
    \le 2 \yej(t_0)
   \le 4 y(t_0)
    <   \infty
    \qquad \text{for all $t \in U_T(t_0)$ and all $j \ge j_0$.}
  \end{align*}
  Choosing $\eta > 0$ so small that also $t_1 - \eta$ and $t_1 + \eta$ belong to $U_T(t_0)$,
  we thus infer from Lemma~\ref{lm:y_bdd_conv_c2} that $(\neps, \ce, \ue) \to (n', c', u')$ in $X_\eta(t_1)$ as $\eps = \eps_j \sea 0$, where $(n', c', u') = (n, c, u)$ a.e.
  Hence, $t_1 \in E_T$; that is, \eqref{eq:hausdorff_12:U_T} holds.

  Since moreover $y \in L^1((0, T))$ by Lemma~\ref{lm:yl1},
  Lemma~\ref{lm:crit_hausdim} shows that the $\frac12$-dimensional Hausdorff measure $\mc H^\frac12$ of $(0, T) \setminus E_T$ vanishes.
  Setting $E \defs \bigcup_{T \in \N} E_T$, we apply countable subadditivity to see that also $\mc H^{\frac12}((0, \infty) \setminus E) = 0$.

  Given $t_0, \hat t_0 \in E$, there exist $\eta, \hat \eta > 0$ and $(n', c', u')$, $(\hat n', \hat c', \hat u')$ such that (along certain null sequences)
  $(\neps, \ce, \ue) \to (n', c', u')$ in $X_\eta(t_0)$, $(\neps, \ce, \ue) \to (\hat n', \hat c', \hat u')$ in $X_{\hat \eta}(\hat t_0)$ and 
  $(n', c', u') = (n, c, u) = (\hat n', \hat c', \hat u')$ a.e.
  If $M \defs [t_0-\eta, t_0+\eta] \cap [\hat t_0 - \hat \eta, \hat t_0 + \hat \eta] \neq \emptyset$, then $(n', c', u') = (\hat n', \hat c', \hat u')$ in $\Ombar \times M$.
  In particular, it is possible to redefine $(n, c, u)$ on a set of Lebesgue measure $0$ such that for all $t_0 \in E$,
  there exists $\eta > 0$ with $(\neps, \ce, \ue) \to (n, c, u)$ in $X_\eta(t_0)$ along some sequence $\eps \sea 0$.
  This regularity implies that the weak solution $(n, c, u)$ is actually also a classical solution in $\Ombar \times E$.
\end{proof}

\begin{proof}[Proof of Theorem~\ref{th:main}]
  We let $(n, c, u)$ be as given by Lemma~\ref{lm:eps_sea_0} and take $E$ as the union of the set $E$ given by Lemma~\ref{lm:hausdorff_12} and $(T_\star, \infty)$, where $T_\star$ is given by Lemma~\ref{lm:eps_sea_0}.
  Since open subsets of $(0, \infty)$ can always be written as a at most countable unions of open intervals,
  all claims are contained in Lemma~\ref{lm:eps_sea_0} and Lemma~\ref{lm:hausdorff_12}.
\end{proof}

\footnotesize

\end{document}